\renewcommand{\(}{\left\(}
\renewcommand{\)}{\right\)}
\renewcommand{\[}{\left\[}
\renewcommand{\]}{\right\]}
\newtheorem{remark}[]{Remark}
\numberwithin{equation}{section}
 \theoremstyle{plain}
\newtheorem{theorem}{Theorem}[section]
\newtheorem{lemma}[theorem]{Lemma}
\newcommand{\sm}{\left(\begin{smallmatrix}}
\newcommand{\esm}{\end{smallmatrix}\right)}
\newtheorem{corollary}[theorem]{Corollary}
\newtheorem{proposition}[theorem]{Proposition}
	\newtheorem{definition}[theorem]{Definition}
\def\proof{\@ifnextchar[{\@oproof}{\@nproof}}
\def\@oproof[#1][#2]{\trivlist\item[\hskip\labelsep\textit{#2 \textbf{Proof of}\
#1.}~]\ignorespaces}
\def\@nproof{\trivlist\item[\hskip\labelsep\textit{Proof.}~]\ignorespaces}
\definecolor{blue}{rgb}{0,0,1}
\definecolor{red}{rgb}{1,0,0}
\definecolor{green}{rgb}{0,.6,.2}
\definecolor{purple}{rgb}{1,0,1}
\long\def\red#1\endred{{\color{red}#1}}
\long\def\blue#1\endblue{{\color{blue}#1}}
\long\def\purple#1\endpurple{{\color{purple}#1}}
\long\def\green#1\endgreen{{\color{green}#1}}
\begin{document}
\title[Period Function   of Maass forms    from Ramanujan's Lost Notebook]{Period Function                                                                                                                                                                                                                                                                       of Maass forms   from Ramanujan's Lost Notebook}
\author{YoungJu Choie}
\address{Department of Mathematics, Pohang Universtity of Science and Technology, \\
Pohang, Republic of Korea}
\email{yjc@postech.ac.kr}
\author{Rahul Kumar}
\address{Department of Mathematics, Indian Institute of Technology, Roorkee-247667, Uttarakhand, India}
\email{rahul.kumar@ma.iitr.ac.in} 


\subjclass[2020]{Primary 11F67, 11F12,  11F99 Secondary 30D05, 33E20}
  \keywords{Period functions of Maass forms, Hecke Eigenform, Lost Notebook, Kronecker limit formula, functional equations, Herglotz function}
\maketitle
\pagenumbering{arabic}
\pagestyle{headings}
\begin{abstract}
The Lost Notebook of Ramanujan contains a number of beautiful formulas, one of which can be found on its page 220. It involves an interesting function, which we denote as $\mathcal{F}_1(x)$.  In this paper, we show that $\mathcal{F}_1(x)$ belongs to the category of period functions as it satisfies the period relations of Maass forms in the sense of Lewis and Zagier \cite{lz}. Hence, we refer to $\mathcal{F}_1(x)$ as the \emph{Ramanujan period function}.  Moreover, one of the salient aspects of the Ramanujan period function $\mathcal{F}_1(x)$ that we found out is that it is a Hecke eigenfunction under the action of Hecke operators on the space of periods.  We also establish that it naturally appears in a Kronecker limit formula of a certain zeta function, revealing its  connections to various topics. Finally, we generalize   $\mathcal{F}_1(x)$  to include a parameter $s,$  connecting our work to the broader theory of period functions developed by Bettin and Conrey \cite{bc} and Lewis and Zagier \cite{lz}. We emphasize that Ramanujan was the first to study this function, marking the beginning of the study of period functions. 
 
\end{abstract}

\maketitle

\tableofcontents
\section{Introduction and Motivation}

Consider the function
\begin{align}\label{refined ramanujan function}
\mathcal{F}_1(x):=\mathscr{F}_1(x)-\frac{1}{2}\left(\gamma-\log\left(\frac{2\pi}{x}\right)\right), \quad x\in\mathbb{C}\backslash(-\infty,0],
\end{align}
where $\gamma $ is the Euler's constant and
$\mathscr{F}_1(x)$ is defined as  
\begin{align}\label{ramanujan function}
\mathscr{F}_1(x):=\sum_{n=1}^\infty\left\{\psi(nx)+\frac{1}{2nx}- \log{(nx)}\right\}, \quad x\in\mathbb{C}\backslash(-\infty,0] 
\end{align}
where  $\psi(x)$ is the  digamma function $\psi(x):=\Gamma'(x)/\Gamma(x)$, with $\Gamma(x)$ being the Euler Gamma function.

On page 220 of his Lost Notebook \cite{rlnb}, Ramanujan recorded the following beautiful symmetric result involving $\mathscr{F}_1(x)$ \cite[p.~293, Entry 13.3.1]{andrewsberndt}
\begin{align}
\sqrt{x} \left\{
\frac{\gamma-\log{(2\pi x)}}{2x}+\mathscr{F}_1(x)\right\}
&=\sqrt{1/x} \left\{
\frac{\gamma-\log{(2\pi/ x)}}{2/x}+\mathscr{F}_1\left(\frac{1}{x}\right)\right\}\label{ramanujan fe}\\
&=-\pi^{-\frac{3}{2}}\int_0^{\infty} \left|\Xi\left(\frac{t}{2}\right)\Gamma\left(\frac{-1+it}{4}\right)\right|^2
 \frac{\cos{ (\frac{1}{2}t\log{x})  }}
{1+t^2}dt,\nonumber
\end{align}
where $\Xi(t)$ denotes Riemann's function defined by \cite[p.~16, Equations (2.1.14), (2.1.12)]{titch}
\begin{align*}
\Xi(t):=\xi(\tfrac{1}{2}+it)\quad \mathrm{and}\quad \xi(s):=\frac{s}{2}(s-1) \pi^{-\frac{s}{2} }\Gamma(\tfrac{s}{2} )\zeta(s),
\end{align*}
where $\zeta(s)$ is the Riemann zeta function. A proof of the above result was given by Berndt and Dixit \cite{BD} for the first time.
 
In Section \ref{rel} we show  that the function $\mathcal{F}_1(x)$
satisfies period relations of Maass forms in the sense of Lewis-Zagier \cite{lz} with spectral parameter $s=\frac{1}{2},$  that is, it satisfies   a three-term functional equation: 
\begin{align*} 
\mathcal{F}_1(x)-\mathcal{F}_1(x+1)- \frac{1}{x+1}\mathcal{F}_1\left(\frac{x}{x+1}\right)=0.
\end{align*}

We will henceforth refer to  the function $\mathcal{F}_1(x)$ as the \emph{``Ramanujan period function"}.

One of the  main results of this paper is that the function $\mathcal{F}_1(x)$ is a Hecke eigenform:
\begin{align*}
(\mathcal{F}_1|_1\widetilde{T}_n)(x)=\sqrt{n}\ d(n)\mathcal{F}_1(x),
\end{align*}
where $d(n)$ is the  usual divisor function, $\widetilde{T}_n$ represents   Hecke operators acting on the space of periods (\cite{choiezagier}, \cite{raza}). For more details, see Section \ref{hecke operator section} below. 

 We also show that the Ramanujan period function $\mathcal{F}_1(x)$ naturally appears as a Kronecker limit formula for a certain partial zeta function over real quadratic fields   (see Section \ref{kronecker section}).  
In Section \ref{higher kronecker section}, we establish higher Kronecker limit formulas for this zeta function. We study the arithmetic properties of a function $J_1(x), $ which is a combination of $\mathcal{F}_1(x)$. The evaluation of $J_1(x)$ at units of real quadratic fields is also provided as an application of the Kronecker limit formula. Further, using the two-term functional equation of  $\mathcal{F}_1(x) $, we find a connection between the Ramanujan period function  $\mathcal{F}_1(x)$ and Eisenstein series of weight $1$ in Section \ref{connectionEisen}.

In Section \ref{generalized RPF}, we discuss how the function $\mathcal{F}_1(x)$ can be extended to include a general complex  parameter $s$. Interestingly, this generalization has appeared in the works of Lewis and Zagier \cite{lz} as well as Bettin and Conrey \cite{bc}, albeit in different forms. However, neither of these papers refers to the Ramanujan's contributions. Ramanujan was, in fact, the first person to study the function $\mathcal{F}_1(x)$, and his work on this function can be seen as the initial step in the broader theory of period functions later studied by Lewis and Zagier \cite{lz} and Bettin and Conrey \cite{bc}.

\section{Period relations }\label{rel}
Our first result of this section shows that the Ramanujan period function $\mathcal{F}_1(x)$ falls under the category of period functions of Maass cusp forms in the sense of Lewis-Zagier \cite{lz}.

\begin{theorem}\label{cleaner functional equations}
 {\textbf{ \textup{(}Period relations\textup{)}} } Let $x\in\mathbb{C}\backslash(-\infty,0]$. 
\begin{enumerate}
\item  The Ramanujan period function $\mathcal{F}_1(x)$ satisfies the period relation:
\begin{align}\label{refined 3-term}
\mathcal{F}_1(x)-\mathcal{F}_1(x+1)-\frac{1}{x+1}\mathcal{F}_1\left(\frac{x}{x+1	}\right)=0.
\end{align}
\item Moreover, it has the following symmetric relation:
\begin{align}\label{refined 2-term}
\mathcal{F}_1(x)-\frac{1}{x}\mathcal{F}_1\left(\frac{1}{x}\right)=0.
\end{align}
\end{enumerate}
 \end{theorem}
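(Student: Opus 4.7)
The plan splits into two parts of quite different character. Part (2) is essentially a repackaging of Ramanujan's symmetric identity \eqref{ramanujan fe} in terms of $\mathcal{F}_1$, while (1) requires a genuinely new analytic input.

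For (2), my plan is to introduce the auxiliary function $G(x):=\mathscr{F}_1(x)+(\gamma-\log(2\pi x))/(2x)$ so that \eqref{ramanujan fe}, after multiplication by $\sqrt{x}$, takes the compact form $x\,G(x)=G(1/x)$. I would then substitute the definition $\mathscr{F}_1(x)=\mathcal{F}_1(x)+\tfrac{1}{2}(\gamma-\log(2\pi/x))$ (and the analogous expression at $1/x$) into both sides: a short algebraic check shows that the elementary logarithmic and Euler-gamma contributions on the two sides coincide, so the identity collapses to $x\,\mathcal{F}_1(x)=\mathcal{F}_1(1/x)$, which is exactly \eqref{refined 2-term}.

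For (1), my plan is to begin from an integral representation whose behaviour under $x\mapsto x+1$ and $x\mapsto x/(x+1)$ is transparent. Combining the Binet-type identity
\begin{equation*}
\psi(z)-\log z+\frac{1}{2z}=-\int_0^\infty e^{-zt}\left[\frac{1}{1-e^{-t}}-\frac{1}{t}-\frac{1}{2}\right]dt\qquad(\Re z>0)
\end{equation*}
with the geometric sum $\sum_{n\ge 1}e^{-nxt}=(e^{xt}-1)^{-1}$, justified by absolute convergence, yields
\begin{equation*}
\mathscr{F}_1(x)=-\int_0^\infty \frac{K(t)}{e^{xt}-1}\,dt,\qquad K(u):=\tfrac{1}{2}\coth(u/2)-\tfrac{1}{u}.
\end{equation*}
After the substitutions $t\mapsto(x+1)t$ in the instance $\mathscr{F}_1(x/(x+1))$ and $t\mapsto xt/(x+1)$ in the instance $\mathscr{F}_1(x+1)$, all three terms of \eqref{refined 3-term} can be recast as integrals against the common weight $1/(e^{xt}-1)$. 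The three-term relation for $\mathscr{F}_1$ then reduces to the single identity
\begin{equation*}
-\int_0^\infty \frac{K(t)-\tfrac{x}{x+1}\,K\!\left(\tfrac{xt}{x+1}\right)-K((x+1)t)}{e^{xt}-1}\,dt\;=\;R(x),
\end{equation*}
where $R(x)$ is the explicit combination of $\gamma$, $\log(2\pi)$, $\log x$ and $\log(x+1)$ produced by applying the difference $\mathcal{F}_1-\mathscr{F}_1$ at the three arguments; a direct computation gives $R(x)=\bigl(\log(2\pi)-\gamma-x\log(1+1/x)\bigr)/(2(x+1))$.

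The principal obstacle is the evaluation of the displayed integral and its matching with $R(x)$, i.e.\ the ``collapse'' of the three $K$-kernels at the three Möbius-shifted arguments into a quantity that integrates to an elementary log expression. I would approach this either directly, exploiting the addition and duplication structure of $\coth$ together with Parseval-type relations for the Mellin transform, or indirectly via the Mellin-Barnes representation of $\mathscr{F}_1$ (whose Mellin transform can be computed explicitly as a product of $\zeta(s)^2$ and gamma factors) and by invoking the functional equation of $\zeta$ to perform the required contour shifts. Either route brings to the surface the same analytic content already hidden inside Ramanujan's identity \eqref{ramanujan fe}, which is ultimately what forces the three-term relation for $\mathcal{F}_1$.
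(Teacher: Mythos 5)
Your treatment of part (2) is correct and is exactly the paper's argument: rewrite Ramanujan's identity \eqref{ramanujan fe} as a two-term relation for $\mathscr{F}_1(x)$ and check that the elementary $\gamma$- and logarithm terms cancel when passing to $\mathcal{F}_1(x)$. Nothing to add there.

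For part (1) the setup is sound: your kernel $K(t)=\tfrac12\coth(t/2)-\tfrac1t$ is the same integrand as in Lemma \ref{int}, the recombination of the three terms over the common weight $1/(e^{xt}-1)$ is legitimate for $x>0$, and your $R(x)$ agrees with the right-hand side of \eqref{threeterm}. But the argument stops exactly where the real work begins. You reduce the theorem to the claim that
\begin{equation*}
-\int_0^\infty \frac{K(t)-\tfrac{x}{x+1}\,K\bigl(\tfrac{xt}{x+1}\bigr)-K((x+1)t)}{e^{xt}-1}\,dt \;=\; R(x),
\end{equation*}
and then only name two possible strategies (addition/duplication of $\coth$ with Parseval, or Mellin--Barnes plus the functional equation of $\zeta$) without carrying out either. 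That identity \emph{is} the theorem: each of the three kernels separately integrates to a transcendental $\mathscr{F}_1$-type quantity, and only the combination is elementary, so the ``collapse'' you invoke is precisely what must be proved. The paper supplies the two missing ingredients concretely: (i) the algebraic identity
$\frac{1}{(e^{t}-1)(e^{xt}-1)}=\frac{1}{(e^{t}-1)(e^{(x-1)t}-1)}-\frac{1}{(e^{xt}-1)(e^{(x-1)t}-1)}-\frac{1}{e^{xt}-1}$,
which lets one recognize $\mathscr{F}_1(x-1)$ and $\frac1x\mathscr{F}_1\bigl(\frac{x-1}{x}\bigr)$ inside the integrand and isolates an elementary remainder $B(x)$; and (ii) the evaluation of $B(x)$, whose individual pieces diverge, by inserting a regularizing factor $t^{s}$, expressing $B(x;s)$ through $\Gamma(s)\zeta(s)$ and $\Gamma(s+1)\zeta(s+1)$, and letting $s\to0$ in the Laurent expansions. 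Until you reproduce these steps (or genuine equivalents), part (1) has a real gap. You should also state explicitly that the identity is first proved for real $x$ in a suitable range and then extended to $\mathbb{C}\setminus(-\infty,0]$ by analytic continuation, since the integral representation only covers $\mathrm{Re}(x)>0$.
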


 Note that the analyticity of the Ramanujan period function $\mathcal{F}_1(x)$ as a function of $x$ in the region $\mathbb{C}\backslash(-\infty, 0]$ is evident owing to the fact that \cite[p.~259, formula 6.3.18]{as}
\begin{align*}
\psi(x)-\log(x)+\frac{1}{2x}=\mathcal{O}\left(\frac{1}{x^2}\right),\ \mathrm{as}\ x\to\infty\ \mathrm{in}\ |\arg(x)|<\pi.
\end{align*}


 Our next result provides asymptotics for $\mathcal{F}_1(x)$ which agree with Lewis and Zagier \cite[p.~236--237]{lz} in the case of $s=1/2.$  
\begin{theorem}\label{asymptotics}
As $x\to\infty$ in $|\arg(x)|<\pi$, we have
\begin{align}\label{xtendinfinity}
\mathcal{F}_1(x)\sim-\frac{1}{2}\left(\gamma-\log\left(\frac{2\pi}{x}\right)\right)+\sum_{n=2}^\infty\frac{\zeta(1-n)\zeta(n)}{x^n},
\end{align}
and as $x\to0$ in $|\arg(x)|<\pi$,
\begin{align}\label{xtendzero}
\mathcal{F}_1(x)\sim-\frac{1}{2x}\left(\gamma-\log\left(2\pi x\right)\right)+\sum_{n=2}^\infty\zeta(1-n)\zeta(n)x^{n-1}.
\end{align}
\end{theorem}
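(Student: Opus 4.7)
The plan is to derive \eqref{xtendinfinity} directly from the classical asymptotic expansion of the digamma function, and then to obtain \eqref{xtendzero} as an immediate corollary of the two-term symmetry \eqref{refined 2-term} established in Theorem \ref{cleaner functional equations}. I begin by recalling Stirling's expansion
\begin{align*}
\psi(z)-\log z+\frac{1}{2z}\sim -\sum_{k=1}^{\infty}\frac{B_{2k}}{2k\,z^{2k}},\qquad |\arg z|<\pi,
\end{align*}
which, via $\zeta(1-2k)=-B_{2k}/(2k)$ and the vanishing of $\zeta(1-n)$ for odd $n\geq 3$, is the same as $\psi(z)-\log z+(2z)^{-1}\sim \sum_{n\geq 2}\zeta(1-n)z^{-n}$. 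Substituting $z=mx$ in the defining series \eqref{ramanujan function}, summing over $m\geq 1$, and swapping the two sums formally gives $\mathscr{F}_1(x)\sim \sum_{n\geq 2}\zeta(1-n)\zeta(n)x^{-n}$; combined with \eqref{refined ramanujan function} this is exactly \eqref{xtendinfinity}.

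To make the interchange of sums rigorous, I would use the truncated Stirling expansion with explicit remainder: for each integer $N\geq 2$,
\begin{align*}
\psi(z)-\log z+\frac{1}{2z}=\sum_{n=2}^{N}\frac{\zeta(1-n)}{z^n}+R_N(z),
\end{align*}
with the standard bound $R_N(z)=\mathcal{O}(|z|^{-N-1})$, uniform on each closed sub-sector $|\arg z|\leq\pi-\varepsilon$; this follows from the Hermite--Binet integral representation for $\psi$. Plugging back and summing over $m\geq 1$,
\begin{align*}
\mathscr{F}_1(x)=\sum_{n=2}^{N}\frac{\zeta(1-n)\zeta(n)}{x^n}+\sum_{m=1}^{\infty}R_N(mx),
\end{align*}
and the tail sum is bounded by $|x|^{-N-1}\sum_{m\geq 1}m^{-N-1}\ll |x|^{-N-1}$, which is precisely what the $\sim$ in \eqref{xtendinfinity} demands at order $N$. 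Since $N$ and $\varepsilon>0$ are arbitrary, the expansion holds throughout $|\arg x|<\pi$.

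For \eqref{xtendzero} the argument is then very short: by \eqref{refined 2-term} one has $\mathcal{F}_1(x)=\tfrac{1}{x}\mathcal{F}_1(1/x)$, and as $x\to 0$ in $|\arg x|<\pi$ the point $1/x$ tends to $\infty$ in the same sector. Substituting the expansion \eqref{xtendinfinity} with $x$ replaced by $1/x$ and multiplying through by $1/x$ yields \eqref{xtendzero} term by term. The main obstacle in the whole argument is the uniform control of the remainder $R_N(z)$ on the open sector $|\arg z|<\pi$, since the Poincar\'e asymptotic is only uniform on closed sub-sectors; this I would handle via the integral form of the remainder in Hermite's formula for $\psi$ rather than by any formal manipulation of Stirling's series.
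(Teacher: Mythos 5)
Your proposal is correct and follows essentially the same route as the paper: both derive \eqref{xtendinfinity} from the asymptotic expansion $\psi(z)-\log z+\tfrac{1}{2z}\sim\sum_{n\geq 2}\zeta(1-n)z^{-n}$ applied at $z=mx$ and summed over $m$, and both obtain \eqref{xtendzero} from \eqref{xtendinfinity} via the two-term relation \eqref{refined 2-term}. Your treatment is in fact more careful than the paper's, which performs the term-by-term summation without justifying the interchange; your truncated remainder bound $\sum_{m\geq 1}R_N(mx)\ll |x|^{-N-1}$ on closed subsectors supplies exactly the missing justification.
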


\begin{remark}
Bettin and Conrey \cite{bc} as well as Lewis and Zagier derived the three-term functional equation given above in \eqref{refined 3-term}. However, we offer a new proof of \eqref{refined 3-term} that does not rely on the periodicity of the Eisenstein series. Moreover, this paper goes beyond these results by exploring further properties of the function $\mathcal{F}_1(x)$, such as the Hecke action, its role in the Kronecker limit formula, and other applications, which are discussed in the upcoming sections.
\end{remark}


\medskip
  
  
\section{Hecke Eigenform}\label{hecke operator section}

In this section we follow the notations from Choie-Zagier\cite{choiezagier} or Radchenko and Zagier \cite{raza}.   
Let $\Gamma$ be the group PSL$_2(\mathbb{Z}),$ which is generated by the matrices $S=\begin{pmatrix}
  0 & -1\\ 1 & 0   
\end{pmatrix}$ and $U=\begin{pmatrix}
    1 & -1\\ 1& 0 
\end{pmatrix}$ with
$S^2=U^3=1$, and let $T=US=\begin{pmatrix}
    1 & 1 \\ 0 & 1
\end{pmatrix}.$
Let $$\mathcal{M}_n:=\{ A\in M_{2\times 2}(\mathbb{Z}) / \{\pm I\}  :\, det(A)=n\},\qquad n\in \mathbb{N}.$$ Further let $\mathcal{R}_n=\mathbb{Q}[\mathcal{M}_n].$ 

Recall that  the Hecke operator $\widetilde{T}_n \in \mathcal{R}_n,$ for each $n\in \mathbb{N},$ acts   on the space of  period functions 
if it satisfies the following relation (see\cite{choiezagier, raza}): 
\begin{eqnarray}\label{existence}
(1-S)\widetilde{T}_n=T_n^{\infty}(1-S)+(1-T)Y,
\end{eqnarray}
for some $Y\in \mathcal{R}_n, $ where $ T_n^{\infty}$ is the usual representative of the $n$-th Hecke operator on the space of modular forms \cite{choiezagier}
$$T_n^{\infty}=\sum_{ad=n}\sum_{0\leq b<d}\begin{bmatrix}
a & b \\
0 & d 
\end{bmatrix} \in \mathcal{R}_n.$$

Recall the standard slash operator of weight $k$ defined as
\begin{align*}
(f|_k\gamma)(x)=\frac{(ad-bc)^{k/2}}{(cx+d)^k}f\left(\frac{ax+b}{cx+d}\right),
\end{align*}
with $\gamma$ being the matrix $\gamma=\begin{pmatrix}
a & b\\ c& d \end{pmatrix}.$

We are now ready to state our main result of this section which shows that the Ramanujan period function $\mathcal{F}_1(x)$ is a Hecke eigenfunction under the action of $\widetilde{T}_n$.
\begin{theorem}\label{hecke operator functional equations}
	Let $\widetilde{T}_n=\sum_{\gamma}v_\gamma\gamma$  represent the $n$-th Hecke operator on periods and that all matrices in $\widetilde{T}_n$ have non-negative entries. Let $x>0$. Then
the Ramanujan period function $\mathcal{F}_1(x)$ is a Hecke eigenfunction of the Hecke operator $\widetilde{T}_n$\textup{:}
\begin{align*}
(\mathcal{F}_1|_1\widetilde{T}_n)(x)=\sqrt{n}\ d(n)\mathcal{F}_1(x),
\end{align*}
where $d(n)$ is the divisor function which counts the total number of the divisors of $n$.
\end{theorem}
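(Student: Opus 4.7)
The plan is to combine the period relations of Theorem \ref{cleaner functional equations} with the Choie--Zagier cohomological identity \eqref{existence} and a direct series-level evaluation of $\mathcal{F}_1\big|_1 T_n^\infty$ using Gauss's multiplication theorem for the digamma function. The symmetric relation \eqref{refined 2-term}, in slash-operator form, reads $\mathcal{F}_1\big|_1(1-S)=0$. Slashing both sides of \eqref{existence} by $\mathcal{F}_1$ and using this vanishing produces the intermediate identity
\begin{align*}
\bigl(\mathcal{F}_1\big|_1 T_n^\infty\bigr)\big|_1(1-S) \;=\; -\bigl(\mathcal{F}_1\big|_1(1-T)\bigr)\big|_1 Y,
\end{align*}
so that $\mathcal{F}_1\big|_1 T_n^\infty$ is determined up to the explicit $T$-coboundary produced by $Y$. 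Once $\mathcal{F}_1\big|_1 T_n^\infty$ is pinned down, rearranging \eqref{existence} recovers $\mathcal{F}_1\big|_1\widetilde{T}_n$ modulo the non-triangular correction matrices in $\widetilde{T}_n$ that are tracked by $Y$.

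For the direct computation I would unfold
\begin{align*}
\bigl(\mathcal{F}_1\big|_1 T_n^\infty\bigr)(x) \;=\; \sqrt{n}\sum_{ad=n}\frac{1}{d}\sum_{b=0}^{d-1}\mathcal{F}_1\!\left(\frac{ax+b}{d}\right),
\end{align*}
substitute the defining series \eqref{ramanujan function} of $\mathscr{F}_1$ together with the constant correction in \eqref{refined ramanujan function}, and evaluate the inner sum over $b$ by Gauss's multiplication formula $\sum_{b=0}^{d-1}\psi(z+b/d)=d\,\psi(dz)-d\log d$. After the reindexing $g=\gcd(m,d)$, $m=gm'$, $d=gd'$ with $\gcd(m',d')=1$, the digamma contributions should collapse to $\sqrt{n}\,d(n)\,\mathscr{F}_1(x)$, while the accompanying logarithmic and reciprocal residual terms, combined with the divisor identities $\sum_{ad=n}1=d(n)$ and $\sum_{ad=n}\log d=\tfrac12 d(n)\log n$, reassemble into the constant $-\tfrac12\sqrt{n}\,d(n)\bigl(\gamma-\log(2\pi/x)\bigr)$ that restores $\sqrt{n}\,d(n)\,\mathcal{F}_1(x)$, plus an explicit rational function $\Phi(x)$.

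The main obstacle I anticipate is the combinatorial bookkeeping of these residuals and the verification that $\Phi$ is precisely the $(1-T)$-coboundary matching $-\mathcal{F}_1\big|_1 Y$ from the cohomological identity. A robust alternative, should the direct matching become unwieldy, is a uniqueness route: after showing that $\mathcal{F}_1\big|_1\widetilde{T}_n$ satisfies both \eqref{refined 3-term} and \eqref{refined 2-term}, match its leading $x\to\infty$ asymptotic with that of $\sqrt{n}\,d(n)\,\mathcal{F}_1$ via Theorem \ref{asymptotics}; as the space of period functions of spectral parameter $1/2$ with the prescribed leading behavior is one-dimensional, this identifies $\mathcal{F}_1\big|_1\widetilde{T}_n$ as $\sqrt{n}\,d(n)\,\mathcal{F}_1$, completing the proof.
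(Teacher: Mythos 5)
There is a genuine structural gap in your main route. The identity \eqref{existence} reads $(1-S)\widetilde{T}_n=T_n^{\infty}(1-S)+(1-T)Y$, so when you slash it with $\mathcal{F}_1$ and invoke $\mathcal{F}_1|_1(1-S)=0$, the \emph{only} term containing $\widetilde{T}_n$ is annihilated: what survives, $\bigl(\mathcal{F}_1|_1T_n^{\infty}\bigr)|_1(1-S)=-\bigl(\mathcal{F}_1|_1(1-T)\bigr)|_1Y$, constrains $\mathcal{F}_1|_1T_n^{\infty}$ but carries no information whatsoever about $\mathcal{F}_1|_1\widetilde{T}_n$, and there is no "rearranging" of \eqref{existence} that isolates $\widetilde{T}_n$, since $(1-S)$ is not invertible in $\mathcal{R}_n$. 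For the same reason the Gauss-multiplication computation, even if carried out exactly, evaluates the wrong operator ($T_n^{\infty}$ instead of $\widetilde{T}_n$). The way \eqref{existence} is actually meant to be used is in the opposite direction: one writes $\mathcal{F}_1$ as a cocycle $u|_1(1-S)$ of a \emph{periodic} function $u$ --- here $u$ is essentially $E_1$, by \eqref{e1f1} --- so that $\mathcal{F}_1|_1\widetilde{T}_n=\bigl(u|_1T_n^{\infty}\bigr)|_1(1-S)$ because periodicity of $u$ kills the $(1-T)Y$ term, and then one uses that $E_1$ is a $T_n^{\infty}$-eigenform with eigenvalue $\sqrt{n}\,d(n)$. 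Your fallback uniqueness argument is also incomplete as stated: the assertion that the space of period functions with spectral parameter $1/2$ and the prescribed growth is one-dimensional is a substantive analytic claim (this is the noncuspidal, Eisenstein part of the Lewis--Zagier correspondence) and would itself require proof.

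For comparison, the paper's proof avoids cocycles entirely. It starts from the Radchenko--Zagier cotangent identity (Proposition \ref{razaprop}): for $\widetilde{T}_n$ with non-negative entries, $(\mathscr{C}\circ\widetilde{T}_n)(x,y)=\sum_{\ell\mid n}\ell\,\mathscr{C}(\ell x,\ell y)$ since $c(\widetilde{T}_n)=0$. It then computes the Mellin transform $-\tfrac{(2\pi)^s}{4}\int_0^\infty\mathscr{C}(ixt,iyt)t^{s-1}dt$, whose value at $s=1$ (after the pole cancels) is $-\tfrac{1}{y}\mathcal{F}_1(x/y)$. Because this kernel is homogeneous of degree $-1$ in $(x,y)$, each term $\ell\,\mathscr{C}(\ell x,\ell y)$ contributes $\ell\cdot\tfrac{1}{\ell y}\mathcal{F}_1(x/y)=\tfrac{1}{y}\mathcal{F}_1(x/y)$, and summing over $\ell\mid n$ gives the eigenvalue $d(n)$ (the $\sqrt{n}$ being the weight-one normalization). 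If you want to salvage your approach, the cleanest repair is the $E_1$-cocycle argument sketched above, which would be a genuinely different and arguably more conceptual proof than the paper's.
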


We next provide a special case of the above theorem. To that end, let us define 
\begin{align}\label{tntilde}
\widehat{T}_n:=\sum_{\substack{0\leq c<a\\0\leq b<d\\ad-bc=n}}\begin{bmatrix}
a&b\\
c&d
\end{bmatrix}.
\end{align}
Then \cite[Proposition 3]{raza} shows that  $\widehat{T}_n\in\mathcal{R}_n$ acts like the $n$-th Hecke operator on periods. More precisely, there exists an element $Y_n\in\mathcal{R}_n$ such that \eqref{existence} is satisfied.

Applying Theorem \ref{hecke operator functional equations} with $\widetilde{T}_n=\widehat{T}_n$ yields the following corollary.
\begin{corollary}\label{hecke operator functional equations corollary}
Let $x>0$ and $n\in\mathbb{N}$. Then the Ramanujan period function $\mathcal{F}_1(x)$ is a Hecke eigenfunction under $\widehat{T}_n :$
\begin{align*}
(\mathcal{F}_1|_1\widehat{T}_n)(x)=\sqrt{n}\ d(n)\mathcal{F}_1(x).
\end{align*}
\end{corollary}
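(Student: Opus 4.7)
The plan is to derive this corollary as a direct application of Theorem \ref{hecke operator functional equations} with the choice $\widetilde{T}_n = \widehat{T}_n$. Two hypotheses of that theorem need to be verified for this particular representative: first, that $\widehat{T}_n$ acts like the $n$-th Hecke operator on the space of period functions, that is, satisfies the relation \eqref{existence} for some $Y_n \in \mathcal{R}_n$; and second, that every matrix occurring in the sum defining $\widehat{T}_n$ has non-negative entries.

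The first hypothesis is supplied by \cite[Proposition 3]{raza}, which is explicitly invoked in the paragraph preceding the corollary. That result guarantees the existence of $Y_n \in \mathcal{R}_n$ such that $(1-S)\widehat{T}_n = T_n^{\infty}(1-S) + (1-T)Y_n$ holds, so $\widehat{T}_n$ qualifies as a Hecke operator on the space of periods in the sense of Section \ref{hecke operator section}.

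The second hypothesis follows by inspecting the index set in \eqref{tntilde}. The inequalities $0 \leq c < a$ and $0 \leq b < d$, combined with the determinant condition $ad - bc = n \geq 1$, force $a \geq 1$, $d \geq 1$, $b \geq 0$, and $c \geq 0$; hence every entry of each matrix $\begin{bmatrix} a & b \\ c & d \end{bmatrix}$ appearing in $\widehat{T}_n$ is non-negative, as required by the hypothesis of Theorem \ref{hecke operator functional equations}.

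With both hypotheses verified, Theorem \ref{hecke operator functional equations} applies verbatim and yields $(\mathcal{F}_1 |_1 \widehat{T}_n)(x) = \sqrt{n}\, d(n)\, \mathcal{F}_1(x)$ for $x > 0$. There is no genuine obstacle specific to this corollary: the substantive work, namely establishing the eigenvalue $\sqrt{n}\, d(n)$ under the slash action of a generic non-negative Hecke representative, has already been carried out in Theorem \ref{hecke operator functional equations}; the present corollary simply records that the canonical representative $\widehat{T}_n$ of Radchenko and Zagier fits into that framework.
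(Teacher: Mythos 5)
Your proposal is correct and matches the paper's own treatment: the corollary is obtained by applying Theorem \ref{hecke operator functional equations} to the representative $\widehat{T}_n$, with the two hypotheses (the relation \eqref{existence} via \cite[Proposition 3]{raza}, and non-negativity of all entries from the index conditions in \eqref{tntilde}) verified exactly as you describe.
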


 For instance, when $n=2$ in \eqref{tntilde} then  $\widehat{T}_n$   is
\begin{align*}
\widehat{T}_2&=\begin{bmatrix}
1&0\\
0&2
\end{bmatrix}+\begin{bmatrix}
1&1\\
0&2
\end{bmatrix}+\begin{bmatrix}
2&0\\
0&1
\end{bmatrix}+\begin{bmatrix}
2&0\\
1&1
\end{bmatrix},
\end{align*}
 hence we get 
\begin{align}\label{5-term}
2\mathcal{F}_1\left(x\right)- \mathcal{F}_1\left(2x\right)-\frac{1}{2}\mathcal{F}_1\left(\frac{x}{2}\right)-\frac{1}{2}\mathcal{F}_1\left(\frac{x+1}{2}\right)-\frac{1}{x+1}\mathcal{F}_1\left(\frac{2x}{x+1}\right)=0.
\end{align}

We conclude this section by noting that  one can find a relation of the type given in \eqref{5-term} for every $n\in\mathbb{N}$ and $\widehat{T}_n$ through application of Corollary \ref{hecke operator functional equations corollary}. 
\medskip

\section{Arithmetic Applications}
\subsection{ Kronecker's limit formula  }\label{kronecker section}


Let K be the real quadratic fields with discriminant $D > 0$. Recall the relation between ideal classes   $\mathscr{B}$ of $K$ and reduced numbers (page 163 in \cite{zagier1975}):
a reduced number $w\in K $ satisfies
\begin{eqnarray}\label{red}
   w>1 > w' >0 
\end{eqnarray} 
and  has a pure periodic continued fraction expansion. 
 If  $\mathscr{B}$ is a narrow ideal class of $K$ with length  $r=\ell(\mathscr{B})$  and cycle $((b_1, \cdots, b_r)) ( b_i\in \mathbb{Z}, b_i \geq 2),$ there are exactly $r$ many reduced numbers $w\in K$ for which $\{1,w\}$ is a  basis for some ideal in $\mathscr{B},$ say the numbers
 
\begin{eqnarray} \label{redu}
w_k =  b_k-\cfrac{1}{b_{k+1}-\ \genfrac{}{}{0pt}{0}{}{\ddots\genfrac{}{}{0pt}{0}{}\ \genfrac{}{}{0pt}{0}{}{-\cfrac{1}{b_{r}-\cfrac{1}{b_{ 1}-\genfrac{}{}{0pt}{0}{}{\ddots}}}}}} 
\end{eqnarray}
for $k=1, \cdots, r.$  Let Red$(\mathscr{B}):=\{w_1, w_2, \cdots, w_r \}$
be the set of all reduced forms in $\mathscr{B}.$  We extend the definition of $w_k$ to $k\in\mathbb{Z}$, where $w_k$ depends only on the class of 
$k\pmod{r}.$
Fix  the ideal $\underline{b}=\mathbb{Z}w_0+\mathbb{Z} \in \mathscr{B}^{-1}.$  Consider the following quadratic form
\begin{eqnarray*}
Q_k(p,q)=\frac{1}{w_k-w_k'}(q+pw_k)(q+pw_k'), 1\leq k \leq r.
\end{eqnarray*} 

Now let us define the following zeta function associated with real quadratic fields: 
\begin{definition}
  Define  the following  partial zeta function  
\begin{eqnarray}\label{qdr}
D^{k/2}\ \widetilde{\zeta}(s, \mathscr{B} )
: =
\sum_{k=1}^r\sum_{p\geq 1,q\geq0}
 \frac{p^s}{Q_k(p,q)^s} .
\end{eqnarray}
\end{definition}

\begin{remark}
Note that the zeta function in \eqref{qdr} is a modified version of the  Dedekind zeta function 
\begin{eqnarray}\label{de}
D^{k/2} {\zeta}(s, \mathscr{B} )
=
\sum_{k=1}^r\sum_{p\geq 1,q\geq0}
 \frac{1}{Q_k(p,q)^s} 
 \end{eqnarray}
 of the narrow ideal class $\mathscr{B}$ of the real quadratic field $K=\mathbb{Q}(\sqrt{D})$. 
In his breakthrough work \cite{zagier1975},  Zagier obtained an explicit formula for the Kronecker's  limit formula of ${\zeta}(s, \mathscr{B} ).$ 
\end{remark}


\begin{theorem}\label{analyticity>2}{\bf (Kronecker's limit formula)}\\
Consider the indefinite binary quadratic form $Q(x,y)=ax^2+bxy+cy^2,$ where $ a,b,c>0$ and the discriminant $b^2-4ac=1$.  The roots of the associated quadratic equation $cw^2-bw+a=0$, denoted as $w$ and $w'$, are ordered such that $w>w'>0$. Using these roots, the quadratic form $Q(x,y)$ can be expressed as $Q(x,y)=\frac{1}{w-w'}(y+xw)(y+xw')$. \\
Let us define the function
\begin{align}\label{Z1}
\mathcal{Z}(s; w, w'):=\sum_{p\geq1,q\geq0} \left(\frac{p (w-w')}{(q+pw)(q+pw')}\right)^s.
\end{align}
Then, we have
\begin{enumerate}
\item
\begin{align}\label{re}
D^{k/2}\ \widetilde{\zeta}(s,\mathscr{B}) =\sum_{k=1}^r\mathcal{Z}(s; w_k, w_k'),
\end{align}
where $w_{k}, i\leq k\leq r$ is the reduced number in (\ref{redu}).
\item  $ \mathcal{Z}(s; w, w')$ is defined in the region $\mathrm{Re}(s)> 2$.
 
\item  $\mathcal{Z}(s; w, w')$  has an analytic continuation in the plane $\mathrm{Re}(s)>\frac{1}{2}$ with having only two simple poles at $s=1$ and $s=2$.

\item the Laurent series expansion of $\mathcal{Z}(s;w,w')$ around $s=1$ is given by
\begin{align}\label{first kronecker limit formula eqn}
   \mathcal{Z}(s; w, w') =\frac{\frac{w-w'}{2ww'}}{s-1}+\widetilde{P}(w,w')+\mathcal{O}(s-1),
\end{align}
where 
\begin{align}\label{p1 in terms of f}
\widetilde{P}(x,y)&:=\mathcal{F}_1(x)-\mathcal{F}_1(y)-\frac{x-y}{2xy}\left(\gamma+\log\left(\frac{x-y}{xy}\right)\right).
\end{align}
\end{enumerate}
\end{theorem}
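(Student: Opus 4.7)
The proof splits into four parts corresponding to the four items in the theorem.

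Part (1) is immediate: substituting the factorization $Q_k(p,q)=(q+pw_k)(q+pw_k')/(w_k-w_k')$ into \eqref{qdr}, each summand takes the form appearing in \eqref{Z1}, and summation over $k$ yields the identity.

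Part (2) uses the elementary inequality $(q+pw)(q+pw')\geq(q+pw')^2$ (valid since $w>w'>0$). The inner $q$-sum is then majorized by $(p(w-w'))^s\zeta(2s,pw')$, and the Hurwitz zeta asymptotic gives an outer sum dominated by $\zeta(s-1)$, whence absolute convergence for $\Re(s)>2$.

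For parts (3) and (4), the strategy is to apply the Euler--Maclaurin formula to the inner sum:
\begin{equation*}
T_p(s):=\sum_{q\geq 0}\frac{1}{((q+pw)(q+pw'))^s}=\int_0^\infty\frac{dq}{((q+pw)(q+pw'))^s}+\frac{1}{2(p^2ww')^s}+E_p(s).
\end{equation*}
The integral is evaluated via the substitution $u=(q+pw')/(q+pw)$ and equals $(p(w-w'))^{1-2s}I(s)$, where $I(s):=\int_{w'/w}^1(1-u)^{2s-2}u^{-s}\,du$ is analytic on $\Re(s)>1/2$. Bounds on the Euler--Maclaurin remainder via its Bernoulli-polynomial integral representation give $E_p(s)=O(p^{-2s-1})$ uniformly on compacta in $\Re(s)>1/2$. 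Multiplying by $(w-w')^sp^s$ and summing over $p$ yields
\begin{equation*}
\mathcal{Z}(s;w,w')=(w-w')^{1-s}I(s)\zeta(s-1)+\tfrac{1}{2}\bigl(\tfrac{w-w'}{ww'}\bigr)^s\zeta(s)+\Phi(s;w,w'),
\end{equation*}
where $\Phi(s;w,w'):=(w-w')^s\sum_{p\geq 1}p^sE_p(s)$ is analytic on $\Re(s)>1/2$. The first term provides the simple pole at $s=2$ (through $\zeta(s-1)$) and the second provides the simple pole at $s=1$ with residue $(w-w')/(2ww')$, establishing part (3) and the polar part of part (4).

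To identify the constant term at $s=1$, we evaluate each piece. The first term contributes $I(1)\zeta(0)=-\tfrac{1}{2}\log(w/w')$; the Laurent expansion of the second produces the constant $\tfrac{w-w'}{2ww'}(\gamma+\log((w-w')/(ww')))$; and for $\Phi(1;w,w')$ we use the exact identity $T_p(1)=(\psi(pw)-\psi(pw'))/(p(w-w'))$. The subtractions in $E_p(1)=T_p(1)-\log(w/w')/(p(w-w'))-1/(2p^2ww')$ telescope each summand (after multiplying by $(w-w')$) into $(\psi(pw)-\log(pw)+\tfrac{1}{2pw})-(\psi(pw')-\log(pw')+\tfrac{1}{2pw'})$, whence $\Phi(1;w,w')=\mathscr{F}_1(w)-\mathscr{F}_1(w')$. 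Converting $\mathscr{F}_1$ into $\mathcal{F}_1$ via \eqref{refined ramanujan function} absorbs the residual logarithms into the stated form $\widetilde{P}(w,w')$.

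The main technical obstacle is producing a uniform bound on the Euler--Maclaurin remainder $E_p(s)$ that is both locally bounded in $s$ throughout the strip $\Re(s)>1/2$ and summable in $p$; since the Euler--Maclaurin expansion is only asymptotic for each fixed $p$, one must use the explicit integral representation of the remainder rather than a termwise comparison in order to establish analyticity of $\Phi$. A secondary but delicate issue is the bookkeeping at $s=1$: three separate contributions --- from $I(1)\zeta(0)$, from the Laurent constant of $\tfrac{1}{2}((w-w')/(ww'))^s\zeta(s)$, and from $\Phi(1;w,w')$ --- must be combined and then converted from $\mathscr{F}_1$ to $\mathcal{F}_1$ to match the precise form of $\widetilde{P}(w,w')$.
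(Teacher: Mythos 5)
Your proposal is correct and, modulo the choice of summation formula, runs parallel to the paper's argument. Part (1) and the convergence bound in part (2) match. For parts (3)--(4) the paper applies the Poisson summation formula to $f(t)=(x-y)^s(px+t)^{-s}(py+t)^{-s}$ and obtains exactly your decomposition: inner sum $=$ integral $+\tfrac12 f(0)+{}$remainder, with the remainder (there a sum of oscillatory cosine integrals, bounded by two integrations by parts) of size $O(p^{-2\Re(s)-1})$; summing against $p^s$ gives the same three-term identity as your display, namely a $\zeta(s-1)I(s)$ piece, a $\tfrac12\bigl(\tfrac{w-w'}{ww'}\bigr)^s\zeta(s)$ piece, and a remainder series analytic on $\Re(s)>\tfrac12$. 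The identification of the constant term at $s=1$ via the exact evaluation $\sum_{q\ge0}\tfrac{x-y}{(px+q)(py+q)}=\tfrac{1}{p}(\psi(px)-\psi(py))$ is also the paper's route. What Euler--Maclaurin buys you is avoiding the oscillatory-integral estimates; what Poisson summation buys the paper is an explicit formula for the remainder. Your $I(s)=\int_{w'/w}^1(1-u)^{2s-2}u^{-s}\,du$ is the paper's $I(s)$ after the substitution you describe, with the same half-plane of analyticity and the same value $I(1)=\log(w/w')$.

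Two cautions. First, to get $E_p(s)=O(p^{-2\Re(s)-1})$ you must take the Euler--Maclaurin remainder to second order (equivalently, integrate $\int_0^\infty B_1(\{t\})f'(t)\,dt$ by parts once more against $B_2(\{t\})/2$, picking up the boundary term $-\tfrac1{12}f'(0)$): the first-order remainder is only $O(p^{-2\Re(s)})$, which would restrict you to $\Re(s)>1$. You gesture at this, but it should be made explicit since the whole continuation to $\Re(s)>\tfrac12$ rests on it. Second, your (correct) bookkeeping yields the constant term $\mathcal{F}_1(w)-\mathcal{F}_1(w')+\frac{w-w'}{2ww'}\left(\gamma+\log\frac{w-w'}{ww'}\right)$, i.e.\ with a plus sign on the last summand, which does not match the minus sign in \eqref{p1 in terms of f}. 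This discrepancy is already present inside the paper: the Laurent expansion of $\frac{(x-y)^s}{2(xy)^s}\zeta(s)$ is stated in the proof with constant $+\frac{x-y}{2xy}\left(\gamma+\log\frac{x-y}{xy}\right)$ but is then used with the opposite sign in the following display. So do not force your computation to agree with \eqref{p1 in terms of f} as printed; record the sign you actually obtain.
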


\bigskip

 \subsection{Higher Kronecker limit formula}\label{higher kronecker section}

Let us define the following function:
\begin{definition}\label{fk}
For $x\in\mathbb{C}\backslash(-\infty,0]$, we define 
\begin{align*}
\mathscr{F}_k(x):=
\begin{cases}
\displaystyle\sum_{n\geq 1} \frac{1}{n^{k-1} }\psi(nx), & \mathrm{if}\ k\geq 3,\\
\displaystyle\sum_{n\geq 1}\frac{1}{n}\big( \psi(nx)-\log{(nx)}\big), &  \mathrm{if}\ k=2,\\
\displaystyle\sum_{n\geq 1}\left(\psi(nx)-\log{(nx)}+\frac{1}{2 nx}\right), & \mathrm{if}\ k=1.
\end{cases}
\end{align*}
 \end{definition}

\begin{remark}
Note that, in the above definition, the instance where $k=2$ corresponds to the Herglotz function studied by Zagier \cite{zagier1975}, while the case when $k=1$ gives the Ramanujan function in \eqref{ramanujan function}. This explains the subscript $1$ in the notation of the function $\mathscr{F}_1(x)$ in \eqref{ramanujan function} or Ramanujan period function $\mathcal{F}_1(x)$ in \eqref{refined ramanujan function}.
\end{remark}

For $k\geq3$ and $k$ being an even natural number, the function $\mathscr{F}_k(x )$ appears in the higher Kronecker limit formula of the Dedekind zeta function defined in \eqref{de} (see \cite{vz}). 
To be specific,  Vlasenko and Zagier in \cite{vz}  established that for any narrow ideal class $\mathscr{B}\in\mathrm{Cl}^+(K)$ and integer $k\geq2$ \cite[Theorem 2]{vz}
\begin{eqnarray}\label{vz higher}
D^{k/2}\zeta(k, \mathscr{B} )
 =\sum_{w\in\mathrm{Red}(\mathscr{B})}(\mathscr{D}_{k-1}\mathscr{F}_{2k})(w,w'), 
\end{eqnarray}
where  $\mathscr{D}_{n}$  is a differential operator defined as \cite[p.~33, Definition 6]{vz}\textup{:}
\begin{align}\label{doperator}
(\mathscr{D}_{n}F)(x,y)=\sum_{i=0}^n\binom{2n-i}{n}\frac{F^{(i)}(x)-(-1)^iF^{(i)}(y)}{i!(y-x)^{n-i}},
\end{align}
for $ n\in\mathbb{N}\cup\{0\}$ and any function $F$. For notations and more details, we refer the reader to the paper of Vlasenko and Zagier \cite{vz}.

After observing that the function $\mathscr{F}_{k}(x)$ appears in \eqref{vz higher} only with even values of $k\geq4$, a natural question arises: \emph{what about the odd values of $k$\textup{?} More precisely, does there exist a zeta function for which either $\mathscr{F}_{2k+1}(x)$ or $\mathscr{F}_{k}(x)$, for all $k\in\mathbb{N}$ with $k\geq3$, plays a pivotal role in its higher Kronecker limit formula}? We answer this question affirmatively in the next theorem and show that the anticipated zeta function is nothing but that we have already defined in \eqref{qdr} above.

\begin{theorem}\label{new limit formula}{\bf (higher Kronecker limit formula)} Let  $\widetilde{\zeta}( k,  \mathscr{B})$ be defined in \eqref{qdr}. Let the differential operator $\mathscr{D}_{n}$ be defined in \eqref{doperator}. Then  for  any natural number $k\geq 3,$ we have
\begin{align}\label{limit formula}
 D^{\frac{k}{2}}\ \widetilde{\zeta}( k,  \mathscr{B})&= \sum_{w\in \mathrm{Red}(\mathscr{B})}  (\mathscr{D}_{k-1}\mathscr{F}_{k})(w,w').
\end{align}
 \end{theorem}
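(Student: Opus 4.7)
The plan is to establish the equality term-by-term over the reduced numbers, reducing the theorem to the local identity
\begin{align*}
\mathcal{Z}(k; w, w') &= (\mathscr{D}_{k-1}\mathscr{F}_k)(w, w')
\end{align*}
for each fixed reduced $w$; summing this over $w \in \mathrm{Red}(\mathscr{B})$ and invoking the splitting \eqref{re} from Theorem \ref{analyticity>2} then yields \eqref{limit formula}. Since $k \geq 3$, the double series defining $\mathcal{Z}(k; w, w')$ converges absolutely, so the rearrangements below are legitimate.

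The core step is the partial fraction identity
\begin{align*}
\frac{1}{(X-A)^k(X-B)^k} &= \sum_{j=1}^k \binom{2k-j-1}{k-1}\frac{(-1)^{k-j}}{(A-B)^{2k-j}(X-A)^j} + \sum_{j=1}^k \binom{2k-j-1}{k-1}\frac{(-1)^{k-j}}{(B-A)^{2k-j}(X-B)^j},
\end{align*}
specialized with $A=-pw$, $B=-pw'$, $X=q$. Multiplying through by $p^k(w-w')^k$ and substituting into $\mathcal{Z}(k;w,w')$ produces terms indexed by $j=1,\ldots,k$ and by the choice of $w$ or $w'$. For each $j\geq 2$, the standard identity
\begin{align*}
\sum_{q\geq 0}\frac{1}{(q+z)^j} &= \frac{(-1)^j}{(j-1)!}\,\psi^{(j-1)}(z)
\end{align*}
converts the inner $q$-sum to a polygamma value, and the remaining outer $p$-sum $\sum_{p\geq 1}\psi^{(j-1)}(pw)/p^{k-j}$ is precisely $\mathscr{F}_k^{(j-1)}(w)$ by Definition \ref{fk} after termwise differentiation (and analogously at $w'$). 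The asymptotics $\psi^{(i)}(z)=O(z^{-i})$ for $i\geq 1$ secure the absolute convergence needed for differentiation under the sum.

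The delicate piece is $j=1$, since $\sum_{q\geq 0}(q+pw)^{-1}$ diverges individually. Crucially, the two $j=1$ contributions at $w$ and $w'$ appear with opposite signs (because $(B-A)^{2k-1} = -(A-B)^{2k-1}$), so they may be paired before the $q$-summation via
\begin{align*}
\sum_{q\geq 0}\left(\frac{1}{q+pw}-\frac{1}{q+pw'}\right) &= \psi(pw')-\psi(pw),
\end{align*}
which is immediate from the standard series for the digamma. The resulting combined $j=1$ term evaluates to $\tfrac{(-1)^{k-1}\binom{2k-2}{k-1}}{(w-w')^{k-1}}\bigl(\mathscr{F}_k(w)-\mathscr{F}_k(w')\bigr)$ and matches the $i=0$ summand of $(\mathscr{D}_{k-1}\mathscr{F}_k)(w,w')$ once one uses $(w'-w)^{k-1}=(-1)^{k-1}(w-w')^{k-1}$. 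Reindexing the $j\geq 2$ terms by $i=j-1$ and comparing with \eqref{doperator} at $n=k-1$ finishes the matching. The principal obstacle is purely bookkeeping: tracking the sign factors $(-1)^{k-j}$, $(-1)^j$, $(-1)^{k-1-i}$ and the orientation flip between $(w-w')^{k-1-i}$ and $(y-x)^{k-1-i}$, while simultaneously isolating the $j=1$ divergent pieces so that every interchange of summation remains valid.
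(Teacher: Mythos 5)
Your proposal is correct and is essentially the paper's argument run in the opposite direction: the paper cites the Vlasenko--Zagier identity $\mathscr{D}_{n}\bigl(\tfrac{1}{T-x}\bigr)=\bigl(\tfrac{x-y}{(T-x)(T-y)}\bigr)^{n+1}$ and applies $\mathscr{D}_{k-1}$ termwise to the digamma series for $\mathscr{F}_k$ to reach $\mathcal{Z}(k;x,y)=(\mathscr{D}_{k-1}\mathscr{F}_k)(x,y)$, whereas you re-derive that same identity by expanding $\mathcal{Z}(k;w,w')$ via the explicit partial-fraction decomposition (which is exactly the content of the cited lemma) and resumming into polygamma values. Both then conclude by summing over $\mathrm{Red}(\mathscr{B})$ via \eqref{re}; your explicit pairing of the divergent $j=1$ pieces is a welcome extra level of care that the paper leaves implicit.
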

A table containing the numerical verification of the above result is given in the next subsection.


\subsection{Tables containing the numerical verification}\label{tables}
The number field $K =\mathbb{Q}(\sqrt{3})$ has two narrow classes $\mathscr{B}_0$ and $\mathscr{B}_1$ with the corresponding sets of reduced quadratic irrationalities being \cite[p.~36]{vz}
\begin{align*}
\mathrm{Red}(\mathscr{B}_0)=\{2+\sqrt{3}\} \quad \mathrm{and}\quad \mathrm{Red}(\mathscr{B}_1)=\left\{1+\frac{1}{\sqrt{3}},\frac{3+\sqrt{3}}{2}\right\}.
\end{align*}

To evaluate the term $D^{k/2}\ \widetilde{\zeta}(k,\mathscr{B})$ in the following table, we made use of the right-hand side of \eqref{re}.

\begin{center}
\begin{tabular}{ |p{1.5cm}|p{0.7cm}|p{3.9cm}|p{3.9cm}|}
 \hline
\begin{center} $\mathrm{Red}(\mathscr{B})$ \end{center} & \begin{center}$k$ \end{center}& \begin{center}$D^{k/2}\ \widetilde{\zeta}(k,\mathscr{B})$\end{center} &  \begin{center}RHS of formula \eqref{limit formula}\end{center}\\
 \hline
 \hspace{2mm}\begin{center}$\mathrm{Red}(\mathscr{B}_0)$\newline$=\{2+\sqrt{3}\}$   \end{center}&  \vspace{1.6mm} \begin{center}3 \end{center} \vspace{1mm}\begin{center}\vspace{2mm}4\end{center}   & \begin{center}\vspace{-1mm} $51.304025670384526\cdots$\end{center} \begin{center}\vspace{3mm}$156.2731732710374\cdots$ \end{center}& \vspace{0.5mm}\begin{center}\vspace{-1mm}$51.304025667471024\cdots$\end{center}\begin{center}\vspace{-3mm}$156.27317327706047\cdots$\end{center}\\ 
\hline
\begin{center}$\mathrm{Red}(\mathscr{B}_1)$\newline$=\{1+\frac{1}{\sqrt{3}},\newline \frac{3+\sqrt{3}}{2}\}$\end{center}   & \vspace{0.5mm} \begin{center}3 \end{center} \begin{center}\vspace{5.5mm}4\end{center}  & \vspace{0.5mm}\begin{center}$8.46173083386907\cdots$\end{center}\begin{center}$11.729190698921457\cdots$ \end{center}& \vspace{0.5mm}\begin{center}$8.461730833267936\cdots$\end{center}\begin{center}$11.729190666820669\cdots$\end{center}\\ 
\hline
\end{tabular}
\end{center}


 \subsection{Evaluation at units}\label{j1 section}
Let us define the function $J(x)$ as \cite[p. 145 \& p. 271]{cohen}
\begin{align*}
J(x):=\int_0^\infty\frac{\log\left(1+e^{-xt}\right)}{1+e^t}dt, \qquad \mathrm{Re}(x)>0.
\end{align*}
This function has gained significant interest and has been studied by various authors over time. Importantly, $J(x)$ appeared in the works of Herglotz \cite{her} and Muzaffar and Williams \cite{muzwil} in connection to various Kronecker limit formulas.
Recently, $J(x)$ has been extensively studied by Radchenko and Zagier \cite{raza}. For example, they found its special values when $x$ is a quadratic unit of a real quadratic field by employing Zagier's Kronecker limit formula from \cite{zagier1975}. Very recently, the current authors \cite{choiekumar} have further developed the theory of the function $J(x)$, providing special values and two-term and three-term functional equations for it.

The function $J(x)$ has a connection with the Herglotz function defined in (\ref{fk}). $$\mathscr{F}_2(x) =\sum_{n\geq 1} \frac{\psi(nx)-\log{(nx)}}{n}, \quad x\in \mathbb{C} \backslash (-\infty, 0], $$  as shown in \cite[p.~230]{raza}
\begin{align}\label{jxfx}
J(x)=\mathscr{F}_2(2x)-2\mathscr{F}_2(x)+ \mathscr{F}_2\left(\frac{x}{2}\right)+\frac{\pi^2}{12x}.
\end{align}

In this section, we introduce an analogue of $J(x)$ denoted by $J_1(x)$. It is defined as follows:
\begin{align}\label{j1x}
J_1(x):&=\int_0^\infty\frac{dt}{\left(1+e^{-t}\right)\left(1+e^{xt}\right)}, \qquad \mathrm{Re}(x)>0.
\end{align}

Further let
$$\mathcal{J}_1(x):=J_1(x)+\log(2).$$

Our next result establishes a connection between $J_1(x)$ and $\mathscr{F}_1(x)$ analogous to the relation given in \eqref{jxfx} along with a two-term functional equation for the function $\mathcal{J}_1(x)$.
\begin{theorem}\label{connection}
Let   $\mathcal{F}_1(x)$ defined in   \eqref{refined ramanujan function}. For $\mathrm{Re}(x)>0$, we have

\begin{enumerate}
\item 
\begin{align}\label{connection eqn2}
\mathcal{J}_1(x)=3\mathcal{F}_1(x)-2\mathcal{F}_1(2x)-\mathcal{F}_1\left(\frac{x}{2}\right) +\frac{1+x}{2x}\log(2).
\end{align}  

\item  
\begin{align}\label{J 2term}
\mathcal{J}_1(x)-\frac{1}{x}\mathcal{J}_1\left(\frac{1}{x}\right)=0.
\end{align}
\end{enumerate}
\end{theorem}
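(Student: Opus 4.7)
The plan is to prove part (1) directly by expanding the integrand of $J_1(x)$ as a product of two geometric series, and to derive part (2) as an immediate consequence of (1) together with the two-term functional equation \eqref{refined 2-term}. For (2): after substituting $x\mapsto 1/x$ in the formula of (1), applying \eqref{refined 2-term} in the form $\mathcal{F}_1(1/y)=y\mathcal{F}_1(y)$ to each of $\mathcal{F}_1(1/x)$, $\mathcal{F}_1(2/x)$, $\mathcal{F}_1(1/(2x))$, and dividing by $x$, the three $\mathcal{F}_1$-contributions convert into $3\mathcal{F}_1(x)$, $\mathcal{F}_1(x/2)$, $2\mathcal{F}_1(2x)$ respectively, while the $\log 2$ piece is recovered with the correct coefficient $(x+1)/(2x)$.

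For part (1), the first step is to use $1/(1+e^{-t})=\sum_{m\geq 0}(-1)^m e^{-mt}$ and $1/(1+e^{xt})=\sum_{n\geq 1}(-1)^{n-1}e^{-nxt}$ for $t>0$, and interchange summation and integration to obtain
\[
J_1(x)=\sum_{n=1}^\infty(-1)^{n-1}\sum_{m=0}^\infty\frac{(-1)^m}{m+nx}=\frac{1}{2}\sum_{n=1}^\infty(-1)^{n-1}\Bigl[\psi\Bigl(\tfrac{nx+1}{2}\Bigr)-\psi\Bigl(\tfrac{nx}{2}\Bigr)\Bigr],
\]
using the standard series $\sum_{m\geq 0}(-1)^m/(m+y)=\tfrac12[\psi(\tfrac{y+1}{2})-\psi(\tfrac{y}{2})]$. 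Legendre's duplication formula $\psi(y+1/2)=2\psi(2y)-\psi(y)-2\log 2$ applied with $y=nx/2$ then rewrites this as
\[
J_1(x)=\sum_{n=1}^\infty(-1)^{n-1}\bigl(\psi(nx)-\psi(nx/2)-\log 2\bigr).
\]
Setting $g(y):=\psi(y)-\log y+\tfrac{1}{2y}=O(1/y^2)$, a short algebraic calculation gives the identity $\psi(nx)-\psi(nx/2)-\log 2=g(nx)-g(nx/2)+\tfrac{1}{2nx}$, which splits the conditionally convergent sum for $J_1(x)$ into three absolutely convergent alternating series. Applying $\sum_n(-1)^{n-1}a_n=\sum_n a_n-2\sum_n a_{2n}$ converts the first two pieces into $\mathscr{F}_1(x)-2\mathscr{F}_1(2x)$ and $\mathscr{F}_1(x/2)-2\mathscr{F}_1(x)$, while the third evaluates to $\log 2/(2x)$. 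Combining yields $J_1(x)=3\mathscr{F}_1(x)-2\mathscr{F}_1(2x)-\mathscr{F}_1(x/2)+\log 2/(2x)$; converting from $\mathscr{F}_1$ to $\mathcal{F}_1$ via \eqref{refined ramanujan function} cancels the $\gamma$- and $\log\pi$-terms and leaves a net constant $-\tfrac12\log 2$, which combined with the $+\log 2$ in $\mathcal{J}_1=J_1+\log 2$ reproduces exactly $\tfrac{x+1}{2x}\log 2$.

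The main technical obstacle is justifying the interchange of summation and integration at the very first step, since the double integral of absolute values diverges near $t=0$ and Fubini does not apply directly. I plan to handle this by inserting a convergence factor $e^{-\epsilon t}$ in the integrand, performing the term-by-term manipulation rigorously for $\epsilon>0$ (where all series and integrals are absolutely convergent), and then passing to the limit $\epsilon\to 0^+$ via an Abel-type argument that controls the difference between the regularised and unregularised sums. Once the $g$-decomposition is in place the subsequent rearrangements are routine, since each of the three resulting series is absolutely convergent.
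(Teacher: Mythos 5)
Your proposal is correct in substance but follows a genuinely different route from the paper. The paper works from the $\mathscr{F}_1$ side: it forms the combination $2\mathscr{F}_1(2x)-3\mathscr{F}_1(x)+\mathscr{F}_1(x/2)$ as a single integral using Lemma \ref{int}, whose kernel $\frac{1}{e^t-1}-\frac{1}{t}+\frac12$ is $\mathcal{O}(t)$ near $t=0$, so every sum--integral interchange there is absolutely convergent; a short algebraic rearrangement of the geometric sums then produces $-\int_0^\infty\bigl(\frac{1}{1+e^{-t}}-\frac12\bigr)\frac{dt}{1+e^{xt}}=-J_1(x)+\frac{\log 2}{2x}$. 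You instead start from $J_1$, expand its integrand as a product of two alternating geometric series, evaluate the inner sum via $\sum_{m\ge0}(-1)^m/(m+y)=\frac12[\psi(\frac{y+1}{2})-\psi(\frac{y}{2})]$, and use the Legendre duplication formula for $\psi$ together with the decomposition $\psi(nx)-\psi(nx/2)-\log 2=g(nx)-g(nx/2)+\frac{1}{2nx}$ and the identity $\sum(-1)^{n-1}a_n=\sum a_n-2\sum a_{2n}$. I checked the bookkeeping (including the conversion from $\mathscr{F}_1$ to $\mathcal{F}_1$, where the $\gamma$- and $\log\pi$-terms cancel and a net $-\frac12\log 2$ survives): it reproduces \eqref{connection eqn2} exactly. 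Your derivation of \eqref{J 2term} from \eqref{connection eqn2} and \eqref{refined 2-term} is the same as the paper's. What your route buys is a direct, purely series-theoretic derivation that never touches the kernel algebra of Lemma \ref{int}; what it costs is that the interchange happens at a point where the double series is not absolutely summable.

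On that technical point, your proposed fix does not work as stated: inserting $e^{-\epsilon t}$ does \emph{not} make the double series absolutely convergent, since $\sum_{m,n}e^{-(m+nx)t}=\frac{1}{(1-e^{-t})(1-e^{-xt})}\sim\frac{1}{xt^2}$ near $t=0$ and the factor $e^{-\epsilon t}$ is harmless there; equivalently, $\sum_{m\ge0,n\ge1}(m+nx+\epsilon)^{-1}$ still diverges for every $\epsilon>0$. The interchange is nevertheless easy to justify without any regulator if you do it one sum at a time: the partial sums $\sum_{n=1}^N(-1)^{n-1}e^{-nxt}$ are dominated by $2e^{-\mathrm{Re}(x)t}/|1+e^{-xt}|$, which is integrable against the bounded factor $(1+e^{-t})^{-1}$, so dominated convergence gives $J_1(x)=\sum_{n\ge1}(-1)^{n-1}\int_0^\infty\frac{e^{-nxt}}{1+e^{-t}}dt$, and the same argument applied to the bounded partial sums of $\sum_{m\ge0}(-1)^me^{-mt}$ evaluates each inner integral as $\sum_{m\ge0}\frac{(-1)^m}{m+nx}$. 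With the interchange justified this way (as an iterated, not double, sum), the rest of your argument goes through as written.
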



\medskip

We next show that $\mathcal{J}_1(x)$ at natural arguments can be given explicitly in terms of logarithms.
\begin{proposition}\label{J at naturals}
Let $n$ be any natural number. Then, we have
\begin{align}\label{J at naturals n}
\mathcal{J}_1(n)=\frac{1 }{n}\log(2)-\frac{1}{n}\sum_{j=1}^n{\vphantom{\sum}}'\frac{\log\left(\frac{1}{2}-\frac{1}{2}\exp\left(\frac{\pi i(2j-1)}{n}\right)\right)}{1+\exp\left(\frac{\pi i(2j-1)}{n}\right)},
\end{align}
and 
\begin{align}\label{J at naturals 1/n}
\mathcal{J}_1\left(\frac{1}{n}\right)= \log{2}-\sum_{j=1}^n{\vphantom{\sum}}'\frac{\log\left(\frac{1}{2}-\frac{1}{2}\exp\left(\frac{\pi i(2j-1)}{n}\right)\right)}{1+\exp\left(\frac{\pi i(2j-1)}{n}\right)},
\end{align}
where the primed summation means that if $n$ is odd then we exclude the term $j=(n+1)/2$ and add $1/2$ to the total sum. In the case when $n$ is even then it is a usual sum without any restrictions.
\end{proposition}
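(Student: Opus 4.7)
The plan is to reduce the defining integral for $J_1(n)$ to a rational integral on $[1,\infty)$ and evaluate it by partial fractions. The substitution $u=e^t$ transforms
\begin{align*}
J_1(n)=\int_0^\infty\frac{dt}{(1+e^{-t})(1+e^{nt})}=\int_1^\infty\frac{du}{(u+1)(u^n+1)}.
\end{align*}
Using the factorization $u^n+1=\prod_{j=1}^n(u-\omega_j)$ with $\omega_j=e^{i\pi(2j-1)/n}$, I would decompose the integrand into partial fractions and integrate term-by-term. The logarithmic divergences at infinity will cancel automatically because the proper rational function $1/[(u+1)(u^n+1)]$ has total residue sum zero.

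The decomposition splits according to the parity of $n$. For $n$ even, every $\omega_j$ differs from $-1$ and one obtains
\begin{align*}
\frac{1}{(u+1)(u^n+1)}=\frac{1/2}{u+1}-\frac{1}{n}\sum_{j=1}^{n}\frac{\omega_j/(1+\omega_j)}{u-\omega_j}.
\end{align*}
For $n$ odd, $\omega_{(n+1)/2}=-1$ coincides with the root of $u+1$, creating a double pole at $u=-1$. A short Taylor expansion of $h(u):=(u^n+1)/(u+1)$ around $u=-1$, using $h(-1)=n$ and $h'(-1)=-n(n-1)/2$, then yields
\begin{align*}
\frac{1}{(u+1)(u^n+1)}=\frac{1/n}{(u+1)^2}+\frac{(n-1)/(2n)}{u+1}-\frac{1}{n}\sum_{\substack{j=1\\ j\neq (n+1)/2}}^{n}\frac{\omega_j/(1+\omega_j)}{u-\omega_j}.
\end{align*}

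After integrating from $1$ to $\infty$, matching the result with \eqref{J at naturals n} will require three auxiliary identities: $\prod_{j=1}^n(1-\omega_j)=2$ (from evaluating $u^n+1$ at $u=1$; combined with the symmetric distribution of the arguments $(2j-1)\pi/n$, this also gives $\sum_j\log(1-\omega_j)=\log 2$ on the principal branch); $\sum_{j=1}^n 1/(1+\omega_j)=n/2$ for $n$ even, obtained by setting $u=-1$ in the logarithmic derivative $\sum_j 1/(u-\omega_j)=nu^{n-1}/(u^n+1)$; and its regularized analogue $\sum_{j\neq (n+1)/2}1/(1+\omega_j)=(n-1)/2$ for $n$ odd, extracted from the same identity by a first-order Taylor expansion at $u=-1$. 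The rearrangement $\omega_j/(1+\omega_j)=1-1/(1+\omega_j)$ then converts $\sum_j \omega_j\log(1-\omega_j)/(1+\omega_j)$ into $\sum_j\log(1-\omega_j)-\sum_j\log(1-\omega_j)/(1+\omega_j)$, and splitting $\log(1-\omega_j)=\log\frac{1-\omega_j}{2}+\log 2$ introduces the factor $1/2$ inside the logarithm as in the proposition. For odd $n$, the primed-sum convention corresponds to assigning the indeterminate $j=(n+1)/2$ term its limiting value $\lim_{\omega\to -1}\log((1-\omega)/2)/(1+\omega)=-1/2$, which combines with the $1/(2n)$ contribution from the double-pole term to produce the stated constant correction.

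Finally, \eqref{J at naturals 1/n} is an immediate consequence of the two-term functional equation $\mathcal{J}_1(1/x)=x\,\mathcal{J}_1(x)$ established in Theorem \ref{connection}(2): taking $x=n$ and multiplying \eqref{J at naturals n} through by $n$ gives \eqref{J at naturals 1/n}. The main delicacy will be the odd-$n$ bookkeeping---correctly extracting the $B_0=1/n$ and $B_1=(n-1)/(2n)$ coefficients of the double pole and reconciling them with the regularization of the $j=(n+1)/2$ term---while the even-$n$ case is a routine consequence of the partial-fraction expansion once the residue and product identities above are in place.
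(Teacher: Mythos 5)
Your argument is correct, but it takes a genuinely different route from the paper's. The paper first proves \eqref{J at naturals 1/n}: substituting $y=e^{-t}$ gives $\mathcal{J}_1(x)=\int_0^1 y^{x-1}\,dy/\big((1+y)(1+y^x)\big)+\log 2$, and setting $x=1/n$, $y=t^n$ reduces this to $n\int_0^1 dt/\big((1+t)(1+t^n)\big)+\log 2$; it then expands only the factor $n/(1+t^n)=\sum_{j}1/(1-t\omega_j)$ and evaluates each piece by the elementary closed form $\int_0^1 dt/\big((1-at)(1+t)\big)=-\tfrac{1}{1+a}\log\tfrac{1-a}{2}$, with the odd-$n$ exceptional term being simply $\int_0^1(1+t)^{-2}dt=\tfrac12$; finally \eqref{J at naturals n} follows from $\mathcal{J}_1(n)=\tfrac1n\mathcal{J}_1(1/n)$. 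You instead work on $[1,\infty)$ with the full partial-fraction decomposition of $1/\big((u+1)(u^n+1)\big)$, which forces you to verify the cancellation of logarithmic divergences at infinity and to invoke the auxiliary identities $\sum_j 1/(1+\omega_j)=n/2$, $\prod_j(1-\omega_j)=2$, and the double-pole expansion of $h(u)=(u^n+1)/(u+1)$ at $u=-1$; all of these are correct as you state them (I checked the residues, the coefficients $1/n$ and $(n-1)/(2n)$, and that the log-coefficients sum to zero), and you then deduce \eqref{J at naturals 1/n} from the functional equation, i.e.\ in the opposite order from the paper. The paper's route buys you a divergence-free computation on a finite interval with no residue-sum lemmas; yours buys a self-contained evaluation of $J_1(n)$ that never passes through $\mathcal{J}_1(1/n)$, at the cost of more bookkeeping. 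Two small points: you should say a word about why the principal branch of $\log(u-\omega_j)$ is continuous along $[1,\infty)$ and asymptotic to $\log u$ (it is, since $\Re(u-\omega_j)\geq 1-\cos(\pi/n)>0$ there). More importantly, your reading of the primed-sum convention --- that the excluded term is replaced by its limiting value $\lim_{\omega\to-1}\log\big(\tfrac{1-\omega}{2}\big)/(1+\omega)=-\tfrac12$ --- is the one that makes the formulas true (check $n=1$: $\mathcal{J}_1(1)=\tfrac12+\log 2$, which requires the contribution $-\tfrac12$, not $+\tfrac12$, inside the primed sum); the statement's literal phrase ``add $1/2$ to the total sum'' only matches if ``total sum'' is read as the whole right-hand side rather than the primed sum, so your formulation is actually the cleaner one.
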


As an application of our functional equations and Kronecker limit formula, we provide the following result.
\begin{proposition}\label{J evaluation}
Let $n\geq1$ be an even number and $u=n+\sqrt{n^2-1}$. Then $\mathcal{J}_1(u)$ can be given by
\begin{align*}
\mathcal{J}_1(u)&=\frac{1}{u-1} C(u)-\frac{2\gamma}{u+1}+\frac{u^2+1}{u(u+1)}\log(2)-\frac{2}{u+1}\log\left(\frac{u-1}{u+1}\right),
\end{align*}
where $C(u)$ is given in terms of the Kronecker limit formula:
\begin{align*}
C(u)=\widetilde{P}\left(u,\frac{1}{u}\right)-2\widetilde{P}\left(\frac{u+1}{2},\frac{u+1}{2u}\right)-2\widetilde{P}\left(\frac{2u}{u+1},\frac{2}{u+1}\right),
\end{align*}
with $\widetilde{P}(x,y)$ being given in \eqref{p1 in terms of f}.
\end{proposition}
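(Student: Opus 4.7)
The plan is to first derive a cleaner version of the expression for $\mathcal{J}_1(x)$ in which all the $\mathcal{F}_1$-arguments pair up under the symmetric relation (\ref{refined 2-term}), and then to specialize at $x=u$ and recognize the resulting $\mathcal{F}_1$-differences as the Kronecker building blocks $\widetilde{P}(\cdot,\cdot)$. The first step is to eliminate $\mathcal{F}_1(2x)$ and $\mathcal{F}_1(x/2)$ from Theorem \ref{connection}(1): solving the five-term Hecke relation (\ref{5-term}) for $2\mathcal{F}_1(x)-\mathcal{F}_1(2x)-\tfrac{1}{2}\mathcal{F}_1(x/2)$ and substituting the result into (\ref{connection eqn2}) yields the much friendlier identity
\begin{align*}
\mathcal{J}_1(x)=-\mathcal{F}_1(x)+\mathcal{F}_1\left(\frac{x+1}{2}\right)+\frac{2}{x+1}\mathcal{F}_1\left(\frac{2x}{x+1}\right)+\frac{1+x}{2x}\log 2.
\end{align*}

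I then specialize at $x=u$. Since $uu'=1$ with $u'=1/u$, the three pairs $(u,1/u)$, $\bigl(\frac{u+1}{2},\frac{2}{u+1}\bigr)$, and $\bigl(\frac{2u}{u+1},\frac{u+1}{2u}\bigr)$ are each reciprocal pairs, so the two-term symmetry (\ref{refined 2-term}) immediately gives $\mathcal{F}_1(1/u)=u\mathcal{F}_1(u)$, $\mathcal{F}_1\bigl(\frac{2}{u+1}\bigr)=\frac{u+1}{2}\mathcal{F}_1\bigl(\frac{u+1}{2}\bigr)$, and $\mathcal{F}_1\bigl(\frac{u+1}{2u}\bigr)=\frac{2u}{u+1}\mathcal{F}_1\bigl(\frac{2u}{u+1}\bigr)$. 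Plugging these into the definition (\ref{p1 in terms of f}) of $\widetilde{P}$ writes each of $\widetilde{P}(u,1/u)$, $\widetilde{P}\bigl(\frac{u+1}{2},\frac{u+1}{2u}\bigr)$, and $\widetilde{P}\bigl(\frac{2u}{u+1},\frac{2}{u+1}\bigr)$ as an explicit linear combination of just the three unknowns $\mathcal{F}_1(u)$, $\mathcal{F}_1\bigl(\frac{u+1}{2}\bigr)$, $\mathcal{F}_1\bigl(\frac{2u}{u+1}\bigr)$, modulo elementary $\gamma$- and logarithmic pieces.

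The final step is direct algebraic matching. When one assembles $C(u)=\widetilde{P}(u,1/u)-2\widetilde{P}\bigl(\frac{u+1}{2},\frac{u+1}{2u}\bigr)-2\widetilde{P}\bigl(\frac{2u}{u+1},\frac{2}{u+1}\bigr)$, the coefficients of $\mathcal{F}_1(u)$, $\mathcal{F}_1\bigl(\frac{u+1}{2}\bigr)$, and $\mathcal{F}_1\bigl(\frac{2u}{u+1}\bigr)$ work out respectively to $1-u$, $u-1$, and $\tfrac{2(u-1)}{u+1}$, so $C(u)/(u-1)$ reproduces \emph{exactly} the combination of $\mathcal{F}_1$-values in the cleaner identity above; consequently $\mathcal{J}_1(u)-C(u)/(u-1)$ is a purely elementary expression. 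The main obstacle, and the only real computation in the argument, is tracking the three $\log\bigl(\frac{x-y}{xy}\bigr)$-contributions coming from the $\widetilde{P}$'s: these are $\log\bigl(\frac{u^2-1}{u}\bigr)$, $\log\bigl(\frac{2(u-1)}{u+1}\bigr)$, and $\log\bigl(\frac{u^2-1}{2u}\bigr)$, whose $\log$-difference $\log\bigl(\frac{u^2-1}{2u}\bigr)-\log\bigl(\frac{u^2-1}{u}\bigr)=-\log 2$ causes the $\gamma$-terms to collapse to $-\tfrac{2\gamma}{u+1}$, and the remaining pieces reassemble, using $\log\bigl(\frac{2(u-1)}{u+1}\bigr)=\log 2+\log\bigl(\frac{u-1}{u+1}\bigr)$ together with $\tfrac{u+1}{u}-\tfrac{2}{u+1}=\tfrac{u^2+1}{u(u+1)}$, into exactly the $\tfrac{u^2+1}{u(u+1)}\log 2$ and $-\tfrac{2}{u+1}\log\bigl(\frac{u-1}{u+1}\bigr)$ terms in the desired formula.
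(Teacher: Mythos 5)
Your proposal is correct and follows essentially the same route as the paper: both proofs combine the five-term Hecke relation \eqref{5-term} with \eqref{connection eqn2} to get $\mathcal{J}_1(x)=-\mathcal{F}_1(x)+\mathcal{F}_1\left(\frac{x+1}{2}\right)+\frac{2}{x+1}\mathcal{F}_1\left(\frac{2x}{x+1}\right)+\frac{1+x}{2x}\log 2$ (the paper's \eqref{3f-j1}), and both use the two-term symmetry \eqref{refined 2-term} to reduce $C(u)$ to $(1-u)$ times that same $\mathcal{F}_1$-combination plus the elementary $\gamma$- and log-pieces, which you track correctly. The only cosmetic difference is direction of presentation (the paper starts from $\rho(\mathscr{B}_1)-2\rho(\mathscr{B}_2)$ and substitutes, while you match $C(u)/(u-1)$ against the cleaned-up $\mathcal{J}_1$ identity), so no further comment is needed.
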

 
\section{  Eisenstein series } \label{connectionEisen} 

 
 \subsection{Eisenstein series of  weight $1$  }
Consider  the following Eisenstein series of weight $1$:
\begin{eqnarray*}\label{es1}
    E_1(\tau)=1 - {4}\sum_{m,n\geq 1}  q^{mn},\quad q=e^{2\pi i \tau}, \tau\in \mathbb{H},
\end{eqnarray*}
where $\mathbb{H}:=\{\tau\in\mathbb{C}:\mathrm{Im}(\tau)>0\}$ is the complex upper half plane.

We now provide a connection between the Ramanujan period function $\mathcal{F}_1(\tau)$ and the Eisenstein series   $E_1(\tau)$:
\begin{proposition}\label{Fourier}
For $\tau\in\mathbb{H}$, we have
\begin{eqnarray} \label{ei}
   \mathcal{F}_1(-\tau)-\mathcal{F}_1( \tau)
   = \frac{\pi i}{2}  {E_1}(\tau). 
\end{eqnarray}
\end{proposition}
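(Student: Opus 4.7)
The plan is to evaluate $\mathcal{F}_1(-\tau)-\mathcal{F}_1(\tau)$ directly from the series definition, reduce each summand via the reflection identity for $\psi$, and then recognize the result as the $q$-expansion of $E_1(\tau)$ up to an elementary correction. First, from \eqref{refined ramanujan function},
\[
\mathcal{F}_1(-\tau)-\mathcal{F}_1(\tau)\;=\;\bigl(\mathscr{F}_1(-\tau)-\mathscr{F}_1(\tau)\bigr)\;-\;\tfrac{1}{2}\bigl(\log(-\tau)-\log(\tau)\bigr).
\]
For $\tau\in\mathbb{H}$ the principal branch gives $\log(-\tau)-\log(\tau)=-i\pi$, so the elementary contribution is $+\tfrac{i\pi}{2}$.

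Next I would simplify each summand of $\mathscr{F}_1(-\tau)-\mathscr{F}_1(\tau)$ using the digamma reflection formula in the form
\[
\psi(-x)-\psi(x)\;=\;\frac{1}{x}+\pi\cot(\pi x),
\]
which follows from $\psi(1-x)-\psi(x)=\pi\cot(\pi x)$ combined with $\psi(1-x)=\psi(-x)-1/x$. The $1/(nx)$ contributed by this identity cancels exactly against the combined $\tfrac{1}{2(-n\tau)}-\tfrac{1}{2n\tau}$ terms, and the log piece inside the sum repeats the branch calculation above to produce a constant $+i\pi$ per $n$. After this cancellation the $n$-th summand collapses to $\pi\cot(\pi n\tau)+i\pi$.

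At this point I would use the standard Lambert-type expansion
\[
\pi\cot(\pi n\tau)+i\pi\;=\;-2\pi i\sum_{k\ge1}e^{2\pi ikn\tau},\qquad \mathrm{Im}(n\tau)>0,
\]
interchange the $n$- and $k$-summations (legitimate by absolute convergence for $|q|<1$) and collect terms by divisor pairs to obtain
\[
\mathscr{F}_1(-\tau)-\mathscr{F}_1(\tau)\;=\;-2\pi i\sum_{m\ge1}d(m)\,q^{m}.
\]
Adding the elementary $\tfrac{i\pi}{2}$ from the first step then gives exactly $\tfrac{i\pi}{2}\bigl(1-4\sum_{m\ge1}d(m)q^m\bigr)=\tfrac{i\pi}{2}E_1(\tau)$, proving \eqref{ei}.

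The main obstacle is bookkeeping rather than substance. The series $\mathscr{F}_1(\tau)$ and $\mathscr{F}_1(-\tau)$ must be combined summand-by-summand \emph{before} cancellation; a pointwise split would leave divergent pieces like $\sum\log(n\tau)$. This is legitimate because each series individually converges in its region of definition, guaranteed by the asymptotic $\psi(z)-\log z+\tfrac{1}{2z}=O(z^{-2})$ for $|\arg z|<\pi$ applied to both $n\tau$ and $-n\tau$ (both in $|\arg|<\pi$ when $\tau\in\mathbb H$); and after cancellation the rearranged $n$-th term $\pi\cot(\pi n\tau)+i\pi=O(|q|^{n})$ decays exponentially, so all interchanges in the final step are routine. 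Some care is also needed in fixing the branch of $\log(-\tau)$ consistently for every $n$, but the calculation $\arg(-n\tau)=\arg(n\tau)-\pi$ applies uniformly on $\mathbb{H}$.
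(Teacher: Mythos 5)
Your proof is correct and follows essentially the same route as the paper's: split off the elementary $\log$ contribution from the definition of $\mathcal{F}_1$, reduce each summand of $\mathscr{F}_1(-\tau)-\mathscr{F}_1(\tau)$ to $\pi\cot(\pi n\tau)+i\pi$ via the digamma reflection formula and the branch computation $\log(n\tau)-\log(-n\tau)=i\pi$, then expand the cotangent as a Lambert series to recover $-4\sum_{m,n\ge1}q^{mn}$. The only cosmetic difference is that you collect terms into $\sum_m d(m)q^m$ whereas the paper keeps the double sum.
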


\medskip

\subsection{Cohomological aspects}

Now Proposition \ref{Fourier} and the functional equation of $\mathcal{F}_1(\tau)$ in (\ref{cleaner functional equations}) imply that,
 \begin{eqnarray*}
   \mathcal{F}_1(-\tau) - \mathcal{F}_1( \tau)=  -\mathcal{F}_1(\tau)-\frac{1}{\tau}\mathcal{F}_1\left(-\frac{1}{\tau}\right)=\frac{\pi i}{2} E_1(\tau),
\end{eqnarray*}
so that
\begin{eqnarray}\label{e1f1}
 \frac{1}{\tau}E_1\left(-\frac{1}{\tau}\right) -  E_1(\tau) 
     =\frac{4}{ \pi i}  \mathcal{F}_1(\tau), 
\end{eqnarray}
Since $E_1(\tau)$ is periodic so $T\rightarrow 0$ and $S\rightarrow \mathcal{F}_1(\tau)$ define  a $1$-cocycle with values in the space of analytic functions on $\mathbb{H}$.  Now using \eqref{e1f1} and the periodicity of $E_1(\tau)$, we can see that 
\begin{eqnarray}\label{another}
\mathcal{F}_1(\tau)=\mathcal{F}_1(\tau+1)+\frac{1}{\tau+1}\mathcal{F}_1\left(\frac{\tau}{\tau+1}\right),\quad \tau\in\mathbb{H}.
\end{eqnarray}
After analytic continuation of $\mathcal{F}_1(\tau)$  from $\mathbb{H}$ to $\mathbb{C} \backslash (-\infty, 0] $,   the result in (\ref{another}) gives
  another proof of period relation of $\mathcal{F}_1(\tau)$ proved in Theorem \ref{cleaner functional equations}.
 
\bigskip

The relation in (\ref{e1f1})   can be considered more generally by considering $\mathcal{F}_s(x), s\in \mathbb{C}.$ Corresponding results and other properties will be investigated in Section \ref{generalized RPF}. 

\medskip
 
\subsection{Special value of $\mathcal{F}_1(x)$  at Rational points }
Kurukawa \cite{kuru} studied the following function
\begin{align}\label{kuruR1}
R_1(\tau)=-\frac{1}{4}\left(E_1\left(-\frac{1}{\tau}\right)-\tau E_1(\tau)\right),\qquad \tau\in\mathbb{H},
\end{align}
where $E_1(\tau)$ is the weight $1$ Eisenstein series which is defined in \eqref{e1f1}.

Equations \eqref{e1f1} and \eqref{kuruR1} together implies that
\begin{align}
R_1(\tau)= -\frac{\tau}{\pi i}\mathcal{F}_1(\tau).
\end{align}

Therefore, we can say that the function $\mathcal{F}_1(\tau)$ is analytic continuation of the Kurukawa function $R_1(\tau)$ in the region $\tau\in\mathbb{C}\backslash(-\infty,0].$

Moreover, Kurukawa \cite[Theorem 1]{kuru} evaluated the function $R_1(\tau)$ at cusps $\tau\to N$ and $\tau\to 1/N$, where $N$ is a natural number. Hence, his theorem provides special values of the Ramanujan period function at natural numbers $\tau=N$ and $\tau=1/ N$:
\begin{theorem}
Let $\mathcal{F}_1(x)$ be defined in \eqref{refined ramanujan function}.   For any natural number $N$ we have 
\begin{align}\label{n}
\mathcal{F}_1(N)=\frac{\pi}{2N}\left\{\frac{1}{\pi}-\frac{1}{N}\sum_{k=1}^{\lfloor N/2\rfloor}(N-2k)\cot\left(\frac{\pi k}{N}\right)\right\},
\end{align}
 and 
\begin{align}\label{1/n}
\mathcal{F}_1\left(\frac{1}{N}\right)=\frac{\pi}{2}\left\{\frac{1}{\pi}-\frac{1}{N}\sum_{k=1}^{\lfloor N/2\rfloor}(N-2k)\cot\left(\frac{\pi k}{N}\right)\right\}.
\end{align}
\end{theorem}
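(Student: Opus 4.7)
The plan is to exploit the identity
\begin{align*}
R_1(\tau)=-\frac{\tau}{\pi i}\mathcal{F}_1(\tau),
\end{align*}
which the paper has already derived by combining \eqref{e1f1} with the definition \eqref{kuruR1}. Although this identity is formulated for $\tau\in\mathbb{H}$, both sides extend analytically to $\mathbb{C}\backslash(-\infty,0]$: $\mathcal{F}_1$ is defined there from the outset, and $R_1$ acquires an analytic continuation past the positive real axis via the modular transformation involving $E_1(-1/\tau)$. In particular, the identity is valid at $\tau=N$ for every natural number $N$, and (upon multiplying through) both values $R_1(N)$ and $\mathcal{F}_1(N)$ are obtained as honest limits from $\mathbb{H}$.

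Next I would simply invoke Kurukawa's explicit evaluation \cite[Theorem 1]{kuru}, which delivers $R_1(N)$ as
\begin{align*}
R_1(N)=\frac{i}{2}\left\{\frac{1}{\pi}-\frac{1}{N}\sum_{k=1}^{\lfloor N/2\rfloor}(N-2k)\cot\left(\frac{\pi k}{N}\right)\right\}.
\end{align*}
Substituting into $R_1(N)=-\tfrac{N}{\pi i}\mathcal{F}_1(N)=\tfrac{iN}{\pi}\mathcal{F}_1(N)$ and solving for $\mathcal{F}_1(N)$ produces exactly the expression \eqref{n}. The factor $\tfrac{\pi}{2N}$ in front of the braces in \eqref{n} is the direct result of this algebraic rearrangement, and the purely imaginary nature of $R_1(N)$ (which is forced by $\mathcal{F}_1(N)\in\mathbb{R}$ for $N>0$) is automatically consistent.

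For \eqref{1/n} the most economical route is to appeal to the symmetric two-term relation \eqref{refined 2-term} of Theorem \ref{cleaner functional equations}, namely $\mathcal{F}_1(x)=\tfrac{1}{x}\mathcal{F}_1(1/x)$; specialising at $x=N$ yields $\mathcal{F}_1(1/N)=N\,\mathcal{F}_1(N)$, so multiplying \eqref{n} by $N$ immediately gives \eqref{1/n}. As a cross-check one can instead feed Kurukawa's companion evaluation of $R_1(1/N)$ into the same identity; the two derivations agree precisely because of \eqref{refined 2-term}.

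I do not foresee a serious obstacle: the only delicate point is the passage from $\mathbb{H}$ to the positive real axis for the identity $R_1(\tau)=-\tfrac{\tau}{\pi i}\mathcal{F}_1(\tau)$, which is routine in view of the asymptotic expansion of $\mathcal{F}_1$ supplied by Theorem \ref{asymptotics} and the standard $q$-expansion of $E_1$. Everything else is a direct substitution followed by one application of the already-proved two-term functional equation.
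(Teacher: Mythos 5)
Your proposal follows exactly the route the paper itself takes: it derives the identity $R_1(\tau)=-\tfrac{\tau}{\pi i}\mathcal{F}_1(\tau)$ from \eqref{e1f1} and \eqref{kuruR1}, reads off \eqref{n} from Kurukawa's evaluation of $R_1$ at the cusp $\tau\to N$, and obtains \eqref{1/n} from \eqref{n} via the two-term relation \eqref{refined 2-term}. The algebraic rearrangement producing the prefactor $\tfrac{\pi}{2N}$ is correct, so there is nothing to add.
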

Note that \eqref{1/n} is just a simple consequence of \eqref{n} and the two-term functional equation of $\mathcal{F}_1(\tau)$ given in \eqref{refined 2-term}.

\medskip


\section{As a limit of a  Period function  of Lewis-Zagier }\label{generalized RPF}

This section recalls a period function  introduced by Lewis and Zagier \cite{lz}. Here we present several new properties of their function and  show that its limiting value at $s=\frac{1}{2}$ is the Ramanujan period function $ \mathcal{F}_1(x)$.

Lewis and Zagier \cite[p.~228, Example 2]{lz} defined the following  function:
\begin{align}\label{psi+}
\psi^+_s(\tau):={\sum_{m,n\geq0}}^*\frac{1}{(m\tau+n)^{2s}},\qquad (\mathrm{Re}(s)>1,\ \tau\in\mathbb{H}),
\end{align}
where  $*$ on the summation sign means that the term  $m = n = 0$ is to be omitted and the terms with either $m$ or $n$ equal to $0$ are to be counted with multiplicity $\frac{1}{2}$.

The next proposition provides some properties of the function $\psi^+_s(\tau)$.
\begin{proposition}\label{properties of GRPF}
Let $x\in\mathbb{C}\backslash(-\infty,0]$ and  let $\zeta(s,x)=\sum_{m=0}^{\infty}\frac{1}{(m+x)^s}, \mathrm{Re}(s)>1$, be the Hurwitz zeta function. 
\begin{enumerate}
\item  For  \textup{Re}$(s)>1$, we have\footnote{The function on the right-hand side of \eqref{f-psi} has been studied by Dixit \cite[Theorem 1.5]{dixitijnt}, where, among other things, he obtained a two-term functional equation for it.}
\begin{align}\label{f-psi}
\psi^+_s(x)=\sum_{n=1}^\infty\left(\zeta(2s,nx)-\frac{1}{2}(nx)^{-2s}+\frac{1}{1-2s}(nx)^{1-2s}\right)+\frac{1}{2}\zeta(2s)+\frac{x^{1-2s}}{2s-1}\zeta(2s-1).
\end{align}
\item Let $\mathrm{Re}(x)>0$ and $\mathrm{Re}(s)>1$. Then
\begin{align}\label{integral representation for GRPF}
\psi^+_s(x)=\frac{1}{\Gamma(2s)}\int_0^\infty\left(\frac{1}{e^t-1}+\frac{1}{2}+\frac{x^{2s}}{2}\right)\frac{t^{2s-1}}{e^{xt}-1}dt.
\end{align}
\item Let $x\in\mathbb{C}\backslash(-\infty,0]$. The function $\psi^+_s(x)$  can be extended analytically as a function of $s$ in the whole complex plane except having a simple pole at $s=1$ with residue $\frac{1}{2x}$.  More precisely, it has the following Laurent series expansion around $s=1$:
\begin{align*}
\psi_s^+(x)=\frac{1/(2x)}{s-1}+\left\{\frac{\gamma-\log x-1}{x}+\frac{\pi^2}{12}\left(1-\frac{1}{x^2}\right)+\sum_{n=1}^\infty\left(\zeta(2,nx)-\frac{1}{nx}\right)\right\}+\mathcal{O}(s-1).
\end{align*} 
\item The function $\psi^+_s(x)$ reduces to the Ramanujan period function $\mathcal{F}_1(x) $ in a special case:
\begin{align*}
\lim_{s\to1/2}\psi_s^+(x)=-\mathcal{F}_1(x).
\end{align*}
Hence we refer the function $\psi_s^+(x)$ as the generalized Ramanujan period function.
\end{enumerate}
\end{proposition}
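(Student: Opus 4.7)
The plan is to derive the four assertions sequentially from the defining lattice sum, each building on its predecessor.

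For (1), I would isolate the boundary contributions of the $*$-summation as $\psi_s^+(x) = \sum_{m,n\geq 1}(mx+n)^{-2s} + \tfrac12\zeta(2s) + \tfrac12 x^{-2s}\zeta(2s)$, then write the interior double sum via the Hurwitz zeta function as $\sum_{n\geq 1}[\zeta(2s, nx) - (nx)^{-2s}]$. Adding and subtracting the first two Euler--Maclaurin terms $\frac{(nx)^{1-2s}}{2s-1}+\tfrac12(nx)^{-2s}$ inside each summand (these will be essential in (3)--(4)) and summing the compensating pieces in closed form as $\frac{x^{1-2s}}{2s-1}\zeta(2s-1) + \tfrac12 x^{-2s}\zeta(2s)$, the two $x^{-2s}\zeta(2s)$ contributions cancel and the claimed formula emerges.

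For (2), I would apply $a^{-2s} = \Gamma(2s)^{-1}\int_0^\infty t^{2s-1}e^{-at}\,dt$, interchange sum and integral by absolute convergence for $\mathrm{Re}(s)>1$, and sum the geometric series to obtain
\begin{align*}
\sum_{m,n\geq 0}^{*} e^{-(mx+n)t} = \frac{1}{(e^t-1)(e^{xt}-1)} + \frac{1}{2(e^t-1)} + \frac{1}{2(e^{xt}-1)}.
\end{align*}
Since $\int_0^\infty \frac{t^{2s-1}}{e^t-1}\,dt = \Gamma(2s)\zeta(2s) = \int_0^\infty \frac{x^{2s}t^{2s-1}}{e^{xt}-1}\,dt$ (by $u=xt$, valid for $\mathrm{Re}(x)>0$), the middle summand may be replaced by $\frac{x^{2s}}{2(e^{xt}-1)}$, giving the common outer factor $\frac{t^{2s-1}}{e^{xt}-1}$ in the integrand claimed in \eqref{integral representation for GRPF}.

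For (3), Euler--Maclaurin gives $\zeta(2s,nx) - \tfrac12(nx)^{-2s} + (1-2s)^{-1}(nx)^{1-2s} = O(n^{-2s-2})$, so the series in (1) defines an analytic function on $\mathrm{Re}(s) > 0$, while $\tfrac12\zeta(2s)+\frac{x^{1-2s}}{2s-1}\zeta(2s-1)$ contributes only one pole there, at $s=1$ from $\zeta(2s-1)$. The Laurent expansions $\zeta(2s-1)=\frac{1}{2(s-1)}+\gamma+O(s-1)$, $x^{1-2s}=x^{-1}(1-2(s-1)\log x+\cdots)$, and $\frac{1}{2s-1}=1-2(s-1)+\cdots$, together with $\tfrac12\zeta(2)=\pi^2/12$ and the series evaluated at $s=1$ as $\sum_n[\zeta(2,nx) - \tfrac12(nx)^{-2} - (nx)^{-1}]$ (absorbing $-\sum_n\tfrac{1}{2(nx)^2}=-\tfrac{\pi^2}{12x^2}$ to match the form $\zeta(2,nx)-(nx)^{-1}$ in the claim), produce the stated expansion.

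For (4) the delicate issue is a triple cancellation of singularities at $s=\tfrac12$. Inside each summand of the series, $\zeta(2s,nx)=\frac{1}{2s-1}-\psi(nx)+O(s-\tfrac12)$ and $\frac{(nx)^{1-2s}}{1-2s}=-\frac{1}{2s-1}+\log(nx)+O(s-\tfrac12)$ combine so that the $\frac{1}{2s-1}$ divergences cancel, leaving $-[\psi(nx)+\tfrac{1}{2nx}-\log(nx)]$; summing yields $-\mathscr{F}_1(x)$. The closed-form terms $\tfrac12\zeta(2s)$ and $\frac{x^{1-2s}\zeta(2s-1)}{2s-1}$ have poles with residues $+\tfrac14$ and $-\tfrac14$ at $s=\tfrac12$; using $\zeta'(0)=-\tfrac12\log(2\pi)$, their finite parts combine to $\tfrac12(\gamma-\log(2\pi/x))$. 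Hence $\lim_{s\to 1/2}\psi_s^+(x)=-\mathscr{F}_1(x)+\tfrac12(\gamma-\log(2\pi/x))=-\mathcal{F}_1(x)$ by \eqref{refined ramanujan function}. The main obstacle is justifying the termwise limit: the per-term cancellation leaves an $O(s-\tfrac12)$ remainder that must be bounded $n$-uniformly (of size $O(n^{-2})$) to interchange the $s\to\tfrac12$ limit with the infinite sum; a second-order Euler--Maclaurin estimate uniform in a complex neighborhood of $s=\tfrac12$ handles this.
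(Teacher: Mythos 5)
Your argument is essentially the paper's: part (1) is the same boundary-term bookkeeping (run in the opposite direction), and parts (3)--(4) use the same expansions of $\zeta(2s,nx)$, $(nx)^{1-2s}/(1-2s)$, $\tfrac12\zeta(2s)$ and $x^{1-2s}\zeta(2s-1)/(2s-1)$ at $s=1$ and $s=\tfrac12$, with the same cancellations (the residues $+\tfrac14$ and $-\tfrac14$ at $s=\tfrac12$, and $\zeta'(0)=-\tfrac12\log(2\pi)$ producing $\tfrac12(\gamma-\log(2\pi/x))$). Part (2) is where you genuinely diverge: you Mellin-transform the lattice sum directly and sum the geometric series, whereas the paper first proves (1) and then feeds its three pieces through the Hermite-type integral for $\zeta(s,a)-\tfrac12 a^{-s}-a^{1-s}/(s-1)$ and the integral for $\zeta(s)$; both land on the same integrand, and your route is arguably more self-contained. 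Your closing remark in (4) about needing an $n$-uniform bound to interchange $\lim_{s\to1/2}$ with the sum addresses a point the paper glosses over, and the proposed fix is correct.

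The one genuine shortfall is in (3). The proposition asserts continuation to the \emph{whole} complex plane, but subtracting only the two terms $\tfrac12(nx)^{-2s}$ and $(nx)^{1-2s}/(1-2s)$ leaves a summand of size $O(n^{-2\mathrm{Re}(s)-1})$, not $O(n^{-2s-2})$ as you wrote (the first Bernoulli correction $\tfrac{B_2}{2}\cdot 2s\,(nx)^{-2s-1}$ is still present), so your series converges and is analytic only for $\mathrm{Re}(s)>0$. That region suffices for everything you need at $s=1$ and $s=\tfrac12$, but not for the full claim. The paper completes the continuation by further subtracting and adding $\frac{1}{\Gamma(2s)}\sum_{k=1}^{m}\frac{B_{2k}\Gamma(2s+2k-1)}{(2k)!\,(nx)^{2s+2k-1}}$ inside each summand, which pushes the region to $\mathrm{Re}(s)>-m$ for every $m$; one must then check that the compensating closed forms $\frac{B_{2k}\Gamma(2s+2k-1)\zeta(2s+2k-1)}{(2k)!\,\Gamma(2s)\,x^{2s+2k-1}}$ introduce no new poles, which holds because the factor $(2s+2k-2)$ hidden in $\Gamma(2s+2k-1)/\Gamma(2s)$ kills the pole of $\zeta(2s+2k-1)$ at $s=1-k$. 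You should also state in (3), not only in (4), why $s=\tfrac12$ is a removable singularity of $\tfrac12\zeta(2s)+x^{1-2s}\zeta(2s-1)/(2s-1)$, since otherwise the claim that $s=1$ is the only pole in $\mathrm{Re}(s)>0$ is unjustified.
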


Lewis and Zagier \cite{lz} also showed that the function  $\psi^+_s(x)$ satisfies the following three-term relation:
\begin{align*}
\psi^+_s(x)-\psi^+_s(x+1)-\frac{1}{(x+1)^{2s}}\psi^+_s\left(\frac{x}{x+1}\right)=0.
\end{align*}

Following are the growth conditions for $\psi^+_{s}(x)$, given by Lewis and Zagier \cite[p.~244]{lz}:

As $x\to0$
\begin{align}
{\psi}^+_{s}(x)\sim\frac{x}{12}\Gamma(2s+1)+\frac{1}{2x^{2s}}\zeta(2s)+\frac{1}{x(2s-1)}\zeta(2s-1),\nonumber
\end{align}
and as $x\to\infty$,
\begin{align}
{\psi}^+_{s}(x)\sim\frac{1}{2}\zeta(2s)+\frac{x^{1-2s}}{2s-1}\zeta(2s-1)+\frac{1}{12x^{2s+1}}\Gamma(2s+1).\nonumber
\end{align}

\subsection{Hecke Eigenform}
In this section we show that the  period function $\psi_s^+(x)$ is a Hecke eigenform under the action of $\widetilde{T}_n$.

\begin{theorem}\label{hecke operator functional equationss}
Let  $\widetilde{T}_n=\sum_{\gamma}v_\gamma\gamma$  be the $n$-th Hecke operator, as it defined in (\ref{existence}), acting  on the space of period functions and that all matrices in $\widetilde{T}_n$ have non-negative entries. Let $x>0$. Then the   period function $\psi_s^+(x)$ is a Hecke eigenform of the Hecke operator $\widetilde{T}_n$\textup{:}
\begin{align*}
(\psi_s^+|_{2s}\widetilde{T}_n)(x)=n^{s}\sigma_{1-2s}(n)\psi_s^+(x),
\end{align*}
where $\sigma_s(n)$ is the generalized divisor function $\sigma_s(n):=\sum_{d|n}d^s$.
\end{theorem}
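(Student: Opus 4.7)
The plan is to compute $(\psi_s^+|_{2s}\widetilde{T}_n)(x)$ directly from the lattice-sum representation \eqref{psi+}, treating it as a first-quadrant analogue of the classical Hecke eigenvalue computation for the weight-$2s$ Eisenstein series $\sum_{(m,n)\neq 0}(m\tau+n)^{-2s}$. The statement at $s=\tfrac{1}{2}$ recovers Theorem \ref{hecke operator functional equations} via Proposition \ref{properties of GRPF}(4), providing a consistency check.

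First, I would work in the half-plane $\mathrm{Re}(s)>1$, where \eqref{psi+} converges absolutely. For each matrix $\gamma=\bigl(\begin{smallmatrix}a&b\\c&d\end{smallmatrix}\bigr)$ appearing in $\widetilde{T}_n$ with $ad-bc=n$, a direct application of the slash operator gives
\begin{align*}
(\psi_s^+|_{2s}\gamma)(x) = n^s\,{\sum_{m,k\geq 0}}^{*}\bigl((ma+kc)x + (mb+kd)\bigr)^{-2s}.
\end{align*}
The hypothesis that all entries of $\gamma$ are non-negative is crucial: it guarantees that the map $(m,k)\mapsto (M,N) := (ma+kc,\,mb+kd)$ sends $\mathbb{Z}_{\geq 0}^2\setminus\{(0,0)\}$ into itself, so lattice points cannot escape the first quadrant when we reindex the sum.

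Summing over $\gamma$ weighted by $v_\gamma$, one obtains
\begin{align*}
(\psi_s^+|_{2s}\widetilde{T}_n)(x) = n^s\,{\sum_{M,N\geq 0}}^{*}\mu(M,N)\,(Mx+N)^{-2s},
\end{align*}
where $\mu(M,N)$ is an $s$-independent weight that counts preimages $(m,k) = (M,N)\gamma^{-1}\in\mathbb{Z}_{\geq 0}^2$ with $v_\gamma$ and the $*$-convention factored in. Writing $(M,N) = \ell(M',N')$ with $\gcd(M',N')=1$, the central combinatorial identity to establish is
\begin{align*}
\mu(\ell M',\ell N') = \sigma_1(\gcd(\ell,n))
\end{align*}
for every $\ell\geq 1$ and every primitive pair $(M',N')$. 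Granted this, the divisor-sum manipulation
\begin{align*}
\sum_{\ell\geq 1}\sigma_1(\gcd(\ell,n))\,\ell^{-2s}
= \sum_{d\mid n}d\sum_{m\geq 1}(dm)^{-2s}
= \sigma_{1-2s}(n)\,\zeta(2s),
\end{align*}
combined with the factorization $\psi_s^+(x) = \zeta(2s)\sum'(M'x+N')^{-2s}$ over primitive pairs, delivers $(\psi_s^+|_{2s}\widetilde{T}_n)(x) = n^s\sigma_{1-2s}(n)\psi_s^+(x)$ throughout $\mathrm{Re}(s)>1$. The identity then extends to all $s\in\mathbb{C}\setminus\{1\}$ by the analytic continuation provided by Proposition \ref{properties of GRPF}(3).

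The main obstacle is the multiplicity identity $\mu(\ell M',\ell N') = \sigma_1(\gcd(\ell,n))$. For the explicit representative $\widehat{T}_n$ of \eqref{tntilde} this can be verified directly by enumerating the matrices $[a,b;c,d]$ subject to $ad-bc=n$, $0\leq c<a$, $0\leq b<d$, and doing residue-class bookkeeping exactly as in the classical proof that the full Eisenstein series is an eigenform of $T_n^\infty$. For a general $\widetilde{T}_n$ satisfying only the abstract cocycle relation \eqref{existence}, one instead applies $(1-S)\widetilde{T}_n = T_n^\infty(1-S) + (1-T)Y$ to $\psi_s^+$ and exploits the known Hecke eigenvalue of the companion modular Eisenstein series, together with the Lewis--Zagier growth conditions recalled just before the theorem, to force the difference $\psi_s^+|_{2s}\widetilde{T}_n - n^s\sigma_{1-2s}(n)\psi_s^+$ to vanish identically.
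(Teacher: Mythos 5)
Your overall strategy is viable and genuinely different from the paper's. The paper does not touch the lattice sum directly: it forms the Mellin-type integral $-\frac{(2\pi)^{2s}}{4\Gamma(2s)}\int_0^\infty\mathscr{C}(ixt,iyt)\,t^{2s-1}\,dt=y^{-2s}\psi_s^+(x/y)$, where $\mathscr{C}(x,y)=\cot(\pi x)\cot(\pi y)+1$, and then transports the Hecke action through the Radchenko--Zagier identity of Proposition \ref{razaprop}, $(\mathscr{C}\circ\widetilde{T}_n)(x,y)=\sum_{\ell\mid n}\ell\,\mathscr{C}(\ell x,\ell y)$ (the constant $c(\widetilde{T}_n)$ vanishes under the non-negativity hypothesis); the factor $\sum_{\ell\mid n}\ell\cdot\ell^{-2s}=\sigma_{1-2s}(n)$ then falls out of the homogeneity of the integral. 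You instead work on the Dirichlet-series side: slash each matrix into the quadrant sum \eqref{psi+}, reindex, and extract the eigenvalue from a lattice-point multiplicity count. Your computation of $(\psi_s^+|_{2s}\gamma)(x)$ is correct, the non-negativity hypothesis is invoked for exactly the right reason, the divisor-sum manipulation $\sum_{\ell\ge1}\sigma_1(\gcd(\ell,n))\ell^{-2s}=\sigma_{1-2s}(n)\zeta(2s)$ is right, and the continuation to all $s$ via Proposition \ref{properties of GRPF}(3) is legitimate. What your route buys is concreteness (no cotangent kernel, no Mellin transform); what it costs is that you must prove the multiplicity identity yourself, whereas the paper simply imports it from \cite{raza}.

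That multiplicity identity, $\mu(\ell M',\ell N')=\sigma_1(\gcd(\ell,n))$, is the entire content of the theorem, and as written it is a genuine gap. It is true (one can check it by hand for $\widehat{T}_2$, provided the boundary points are weighted by $\tfrac12$ on both sides of the count --- a bookkeeping point you flag but do not carry out), but your claim that it follows ``exactly as in the classical proof'' for $T_n^\infty$ is too optimistic: the classical argument relies on upper-triangular representatives acting on the full lattice $\mathbb{Z}^2$, whereas here the matrices of $\widehat{T}_n$ have nonzero lower-left entries and the sum is restricted to a quadrant, so the residue-class bookkeeping does not transfer. In fact your identity is precisely the Dirichlet-coefficient avatar of Proposition \ref{razaprop}: expanding each cotangent in $\mathscr{C}(i\ell xt,i\ell yt)$ as a geometric series turns that proposition into exactly the statement that the weighted preimage count of $(M,N)$ under $\widetilde{T}_n$ equals $\sigma_1(\gcd(\gcd(M,N),n))$. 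The cleanest repair is therefore not to re-derive the combinatorics but to quote Proposition \ref{razaprop} and perform this translation; your closing suggestion of bootstrapping from the cocycle relation \eqref{existence} and growth conditions is a plausible alternative for general $\widetilde{T}_n$ but is only a sketch and would itself need the uniqueness/vanishing argument to be made precise.
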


Note that, in the special case $s\to1/2$, the above theorem reduces to the result for the Ramanujan period function $\mathcal{F}_1(x)$ as stated in Theorem \ref{hecke operator functional equations}.

\bigskip

\subsection{Connection to the work of Bettin and Conrey}

For $s\in \mathbb{C},$  consider Eisenstein series  
\begin{align}\label{eisen}
E_{s}(x)=1+\frac{2}{\zeta(1-s)}  \sum_{n=1}^\infty\sigma_{s-1}(n)q^n, q=e^{2\pi i x},  x \in \mathbb{H}. 
\end{align}
Bettin and Conrey \cite{bc} defined the period function:
\begin{align*}
\Psi_s(x):=E_{2s}(x)-\frac{1}{x^{2s}}E_{2s}\left(-\frac{1}{x}\right),\qquad\tau\in\mathbb{H},
\end{align*}
and showed that it satisfies the two term and three term period relations:
\begin{align*}
\Psi_s(x)-\Psi_s(x+1)-\frac{1}{(x+1)^{2s}}\Psi_s\left(\frac{x}{x+1}\right)=0.
\end{align*}

We conclude this subsection by demonstrating that the function $\Psi_s(\tau)$ is, in fact, equivalent to the   period function $\psi_s^+(\tau):$ 
\begin{proposition}\label{RaBC}
Let $\mathrm{Re}(s)>1$ and $\tau\in\mathbb{H}$, we have
\begin{align*}
\Psi_{s}(\tau)=\frac{ 2 (1- e^{ -2\pi i s})}{(1+e^{-2\pi i s}) \zeta(2s)}\psi_s^+(\tau).
\end{align*}
\end{proposition}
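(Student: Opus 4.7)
The plan is to express both $\psi_s^+(\tau)$ and $\tau^{-2s}\psi_s^+(-1/\tau)$ in closed form by means of the Lipschitz summation formula, and then combine two such evaluations (the second obtained by substituting $\tau\mapsto -1/\tau$) so that the right-hand side reorganises into $A(\tau)-\tau^{-2s}A(-1/\tau)$, where $A(\tau):=\sum_{n\geq 1}\sigma_{2s-1}(n)q^n$; by the definition of $E_{2s}$ and $\Psi_s$, this combination is exactly proportional to $\Psi_s(\tau)$.

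Concretely, I would first split
\begin{align*}
\psi_s^+(\tau)=\sum_{m,n\geq 1}(m\tau+n)^{-2s}+\tfrac{1}{2}\zeta(2s)(1+\tau^{-2s}),
\end{align*}
rewrite $\sum_{m\geq 1,\,n\in\mathbb{Z}}(m\tau+n)^{-2s}=\sum_{m,n\geq 1}(m\tau+n)^{-2s}+\zeta(2s)\tau^{-2s}+\sum_{m,n\geq 1}(m\tau-n)^{-2s}$, and evaluate the $n$-complete inner sum via Lipschitz summation as $\frac{(-2\pi i)^{2s}}{\Gamma(2s)}A(\tau)$. For $\tau^{-2s}\psi_s^+(-1/\tau)$ the principal-branch identities $(-1/\tau)^{2s}=e^{2\pi i s}\tau^{-2s}$ and $(-m/\tau+n)^{2s}=(n\tau-m)^{2s}\tau^{-2s}$, both valid on $\mathbb{H}$ because the arguments involved remain in $(0,\pi)$, convert its double sum into $\tau^{2s}\sum_{m,n\geq 1}(m\tau-n)^{-2s}$ after swapping $m\leftrightarrow n$. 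Adding the two evaluations yields
\begin{align*}
\psi_s^+(\tau)+\tau^{-2s}\psi_s^+(-1/\tau)=\frac{(-2\pi i)^{2s}}{\Gamma(2s)}A(\tau)+\tfrac{1}{2}\zeta(2s)(1+e^{-2\pi i s}).
\end{align*}

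Applying this same identity with $\tau$ replaced by $-1/\tau$ and then dividing by $\tau^{2s}$ gives a companion relation in which the roles of $\psi_s^+(\tau)$ and $\tau^{-2s}\psi_s^+(-1/\tau)$ are interchanged (picking up a factor $e^{-2\pi i s}$ from the branch). Subtracting the two cancels $\tau^{-2s}\psi_s^+(-1/\tau)$ and leaves
\begin{align*}
(1-e^{-2\pi i s})\psi_s^+(\tau)=\frac{(-2\pi i)^{2s}}{\Gamma(2s)}\bigl[A(\tau)-\tau^{-2s}A(-1/\tau)\bigr]+\tfrac{1}{2}\zeta(2s)(1+e^{-2\pi i s})(1-\tau^{-2s}).
\end{align*}
By \eqref{eisen} the bracket equals $\tfrac{1}{2}\zeta(1-2s)\bigl[\Psi_s(\tau)-(1-\tau^{-2s})\bigr]$, and Riemann's functional equation $\zeta(1-2s)=2^{1-2s}\pi^{-2s}\cos(\pi s)\Gamma(2s)\zeta(2s)$ together with $(-2\pi i)^{2s}=(2\pi)^{2s}e^{-i\pi s}$ collapses the leading constant to $\tfrac{1}{2}(1+e^{-2\pi i s})\zeta(2s)$; the $(1-\tau^{-2s})$ contributions then cancel, and the stated identity drops out.

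The main obstacle will be the careful bookkeeping of branches: one must verify that, for $\tau\in\mathbb{H}$ and $m,n\geq 1$, the principal-branch equalities $(-1/\tau)^{2s}=e^{2\pi i s}\tau^{-2s}$ and $(-m/\tau+n)^{2s}=(n\tau-m)^{2s}\tau^{-2s}$ hold without hidden $2\pi i$-shifts (which reduces to the geometric fact that $\arg(-m/\tau+n)+\arg(\tau)<\pi$ on $\mathbb{H}$), and then marshal the various factors $e^{\pm i\pi s}$, $\cos(\pi s)$ and $(-2\pi i)^{2s}$ so that they collapse precisely to the predicted constant $\tfrac{2(1-e^{-2\pi i s})}{(1+e^{-2\pi i s})\zeta(2s)}$.
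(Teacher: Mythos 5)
Your proof is correct and follows essentially the same route as the paper: both identify the symmetrized sum $\psi_s^+(\tau)+\tau^{-2s}\psi_s^+(-1/\tau)$ with $\tfrac{1}{2}(1+e^{-2\pi is})\zeta(2s)E_{2s}(\tau)$ and then antisymmetrize under $\tau\mapsto-1/\tau$, using the branch factor $(-1/\tau)^{-2s}=e^{-2\pi is}\tau^{2s}$, to isolate $(1-e^{-2\pi is})\psi_s^+(\tau)$. The only difference is that you re-derive the $q$-expansion of that symmetrized sum from scratch via Lipschitz summation, whereas the paper simply quotes it as formula (4.3) of Lewis and Zagier.
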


%


\section{Proofs}

We first provide an integral representation for the function $\mathscr{F}_1(x)$ which is imperative to prove some of our results. 
\begin{lemma}\label{int} 
Let the function $\mathscr{F}_1(x)$ be defined in \eqref{ramanujan function}. Let $\mathrm{Re}(x)>0$, then
\begin{eqnarray}\label{integral representation for Ramanujan}
 \mathscr{F}_1(x)=-\int_0^\infty\left(\frac{1}{e^{t}-1}-\frac{1}{t}+\frac{1}{2}\right)\frac{1}{e^{xt}-1}dt.
\end{eqnarray}
\end{lemma}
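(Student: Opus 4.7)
The plan is to start from Binet's second integral representation of the digamma function, namely
\begin{align*}
\psi(y)-\log(y)+\frac{1}{2y}=-\int_{0}^{\infty}\left(\frac{1}{e^{t}-1}-\frac{1}{t}+\frac{1}{2}\right)e^{-yt}\,dt,
\end{align*}
valid for $\Re(y)>0$. Setting $y=nx$ with $\Re(x)>0$, each summand of $\mathscr F_1(x)$ in \eqref{ramanujan function} acquires an integral representation whose exponential factor is $e^{-nxt}$. Summing over $n\geq 1$ and interchanging the order of summation and integration will produce the geometric series $\sum_{n\geq 1}e^{-nxt}=1/(e^{xt}-1)$, which is precisely the kernel appearing in \eqref{integral representation for Ramanujan}.

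The only nontrivial point is the justification of the interchange. Here one notes that the function
\begin{align*}
g(t):=\frac{1}{e^{t}-1}-\frac{1}{t}+\frac{1}{2}=\frac{1}{2}\coth(t/2)-\frac{1}{t}
\end{align*}
is nonnegative on $(0,\infty)$, a standard consequence of $\tanh(u)\leq u$ for $u\geq 0$; equivalently, $g(t)$ admits the everywhere positive Bernoulli expansion $\sum_{k\geq 1}B_{2k}t^{2k-1}/(2k)!$ with asymptotics $g(t)\sim t/12$ near $0$ and $g(t)\sim 1/2$ as $t\to\infty$. Consequently the double ``integral-sum'' has a nonnegative integrand, and Tonelli's theorem permits unconditional interchange. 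Convergence of the resulting integral on $(0,\infty)$ for $\Re(x)>0$ is then immediate: near $t=0$ we have $g(t)/(e^{xt}-1)\sim 1/(12x)$, and for large $t$ the factor $1/(e^{xt}-1)$ provides exponential decay.

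I do not anticipate a real obstacle here; the main care is simply to verify the positivity of $g(t)$ (or, alternatively, to bound $\sum_n \int |\,g(t)e^{-nxt}/\,|$ directly by splitting $(0,\infty)=(0,1]\cup[1,\infty)$ and estimating each piece) so that Fubini/Tonelli can be invoked rigorously. Once the interchange is legitimate, the identity
\begin{align*}
\mathscr F_1(x)=-\int_{0}^{\infty}g(t)\sum_{n=1}^{\infty}e^{-nxt}\,dt=-\int_{0}^{\infty}g(t)\,\frac{dt}{e^{xt}-1}
\end{align*}
follows immediately, which is exactly \eqref{integral representation for Ramanujan}. The identity extends from $\Re(x)>0$ (where both sides are analytic in $x$) to the full domain in question by analytic continuation if needed later.
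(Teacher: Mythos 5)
Your proposal is correct and follows essentially the same route as the paper: both start from the Binet-type formula $\psi(y)-\log y+\tfrac{1}{2y}=-\int_0^\infty\bigl(\tfrac{1}{e^t-1}-\tfrac{1}{t}+\tfrac{1}{2}\bigr)e^{-yt}\,dt$, set $y=nx$, sum over $n$, and interchange sum and integral by absolute convergence; your extra care with Tonelli via the positivity of $g(t)$ is a valid refinement (though note the parenthetical claim that the expansion $\sum_{k\ge1}B_{2k}t^{2k-1}/(2k)!$ is ``everywhere positive'' term by term is not right, since the $B_{2k}$ alternate in sign --- the $\tanh(u)\le u$ argument is the one to rely on).
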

\begin{proof}
Using the well-known result \cite[p.~248]{ww}
\begin{align*}
\psi(x)-\log(x)+\frac{1}{2x}=-\int_0^\infty\left(\frac{1}{e^{t}-1}-\frac{1}{t}+\frac{1}{2}\right)e^{-xt}dt, 
\end{align*}
with replacing $x$ by $nx$ and then summing over $n$, we obtain 
\begin{align*}
\mathscr{F}_1(x)&=-\sum_{n\geq1}\int_0^\infty\left(\frac{1}{e^{t}-1}-\frac{1}{t}+\frac{1}{2}\right)e^{-nxt}dt\\
&=-\int_0^\infty\left(\frac{1}{e^{t}-1}-\frac{1}{t}+\frac{1}{2}\right)\frac{1}{e^{xt}-1}dt,
\end{align*}
where the last step follows upon interchanging the order of summation and integration, which is justified due the absolute convergence. This proves our \eqref{integral representation for Ramanujan}.
\end{proof}

\medskip
\subsection{Period relations}
\begin{proof}[\textbf{Theorem \textup{\ref{cleaner functional equations}}}][] 
After rephrasing the Ramanujan's result \eqref{ramanujan fe}, we get
\begin{align*}
    \mathscr{F}_1(x)-\frac{1}{x}\mathscr{F}_1\left(\frac{1}{x}\right)=\frac{1}{2}\left(\gamma-\log\left(\frac{2\pi}{x}\right)\right)-\frac{1}{2x}\left(\gamma-\log\left(2\pi x\right)\right).
\end{align*}
It is now easy to obtain \eqref{refined 2-term} using the fact $\mathscr{F}_1(x)=\mathcal{F}_1(x)+\frac{1}{2}\left(\gamma-\log\left(\frac{2\pi}{x}\right)\right)$ in the above equation and simplifying the expressions.

We next provide a proof of the three-term functional equation \eqref{refined 3-term}. We first assume $x>1$, and then we extend the proof through analytic continuation. 

Employing the fact that
\begin{align}
&\frac{1}{\left(e^{t}-1\right)\left(e^{xt}-1\right)}
 =\frac{1}{\left(e^{t}-1\right)\left(e^{(x-1)t}-1\right)}-\frac{1}{\left(e^{xt}-1\right)\left(e^{(x-1)t}-1\right)}-\frac{1}{e^{xt}-1}\nonumber
\end{align}
in the integral representation of $\mathscr{F}_1(x)$ given in  (\ref{integral representation for Ramanujan}), we arrive at
\begin{align}
 \mathscr{F}_1(x)
& =-\int_0^\infty
 \left\{ 
\frac{1}{\left(e^{t}-1\right)\left(e^{(x-1)t}-1\right)}-\frac{1}{\left(e^{xt}-1\right)
\left(e^{(x-1)t}-1\right)}
-\frac{1}{e^{xt}-1} -\frac{1}{t\left(e^{xt}-1\right)}\right.\nonumber\\
 &\qquad\qquad\left.+\frac{1}{2\left(e^{xt}-1\right)}
 \right\}  dt.\nonumber
\end{align}
We next rewrite the integrand of the above expression as 
\begin{align}\label{withax}
\mathscr{F}_1(x)&=-\int_0^\infty\left\{\left(\frac{1}{e^{t}-1}-\frac{1}{t}+\frac{1}{2}\right)\frac{1}{e^{(x-1)t}-1}-\left(\frac{1}{e^{xt}-1}-\frac{1}{xt}+\frac{1}{2}\right)\frac{1}{e^{(x-1)t}-1}\right.\nonumber\\
&\qquad\left.+\left(\frac{1}{t\left(e^{(x-1)t}-1\right)}-\frac{1}{xt\left(e^{(x-1)t}-1\right)}-\frac{1}{t\left(e^{xt}-1\right)}-\frac{1}{2\left(e^{xt}-1\right)}\right)\right\}dt\nonumber\\
&=\mathscr{F}_1(x-1)-\frac{1}{x}\mathscr{F}_1\left(\frac{x-1}{x}\right)-B(x),
\end{align}
where the last step follows invoking the definition of $\mathscr{F}_1(x)$. Here
\begin{align*}
B(x):=\int_0^\infty\left\{\frac{(x-1)/x}{t\left(e^{(x-1)t}-1\right)}-\left(\frac{1}{t}+\frac{1}{2}\right)\frac{1}{\left(e^{xt}-1\right)}\right\}dt.
\end{align*}
It remains to evaluate the integral $B(x)$. To that end, consider the integral
\begin{align*}
B(x;s) :=\int_0^\infty\left\{\frac{(x-1)/x}{t\left(e^{(x-1)t}-1\right)}-\left(\frac{1}{t}+\frac{1}{2}\right)\frac{1}{\left(e^{xt}-1\right)}\right\}t^sdt,
\end{align*}
for Re$(s)\geq0$. To separate the integrals, we first assume Re$(s)>1$. So that, we have 
\begin{align}\label{axsa}
B(x;s)&=\frac{x-1}{x}\int_0^\infty\frac{t^{s-1}}{e^{(x-1)t}-1}dt-\int_0^\infty\frac{t^{s-1}}{e^{xt}-1}dt-\frac{1}{2}\int_0^\infty\frac{t^s}{e^{xt}-1}dt\nonumber\\
&=\frac{1}{x(x-1)^{s-1}}\int_0^\infty\frac{t^{s-1}}{e^{t}-1}dt-\frac{1}{x^{s}}\int_0^\infty\frac{t^{s-1}}{e^{t}-1}dt-\frac{1}{2x^{s+1}}\int_0^\infty\frac{t^s}{e^{t}-1}dt,
\end{align}
where the last step follows by making the change of variable $t\rightarrow t/(x-1)$ in the first integral, and $t\rightarrow t/x$ in each of the last two integrals. By the help of the well-known formula \cite[p.~266]{ww}
\begin{align}\label{gammazeta}
\Gamma(z)\zeta(z)=\int_0^\infty\frac{t^{z-1}}{e^t-1}dt\quad(\mathrm{Re}(z)>1),
\end{align}
in \eqref{axsa}, we see that
\begin{align}\label{befreos0}
B(x;s)&=\frac{1}{x(x-1)^{s-1}}\Gamma(s)\zeta(s)-\frac{\Gamma(s)\zeta(s)}{x^s}-\frac{1}{2x^{s+1}}\Gamma(s+1)\zeta(s+1).
\end{align}
Note that the right-hand side of the above equation is analytic for Re$(s)\geq0$ and has a removable singularity at $s=0$. Therefore, we want to take $s\to0$ on both sides of \eqref{befreos0}. To that end, we expand all the functions as their Laurent series expansions around $s=0$:
\begin{align*}
\Gamma(s)&=\frac{1}{s}-\gamma+\mathcal{O}(s),&
\Gamma(s+1)&=1-\gamma s+\mathcal{O}(s^2),\\
\zeta(s)&=-\frac{1}{2}-\frac{s}{2}\log(2\pi)+\mathcal{O}(s^2),&
\zeta(s+1)&=\frac{1}{s}+\gamma+\mathcal{O}(s),\\
(x-1)^{-s}&=1-s\log(x-1)+\mathcal{O}(s^2),&
x^{-s}&=1-s\log(x)+\mathcal{O}(s^2).
\end{align*}
Therefore,
\begin{align*}
B(x,0)&=\lim_{s\to0}\left\{\frac{x-1}{x}\left(\frac{1}{s}-\gamma+\mathcal{O}(s)\right)\left(1-s\log(x-1)+\mathcal{O}(s^2)\right)\left(-\frac{1}{2}-\frac{s}{2}\log(2\pi)+\mathcal{O}(s^2)\right)\right.\nonumber\\
&\qquad\left.\quad-\left(1-s\log(x-1)+\mathcal{O}(s^2)\right)\left(\frac{1}{s}-\gamma+\mathcal{O}(s)\right)\left(-\frac{1}{2}-\frac{s}{2}\log(2\pi)+\mathcal{O}(s^2)\right)\right.\nonumber\\
&\qquad\left.\quad-\frac{1}{2x}\left(1-s\log(x)+\mathcal{O}(s^2)\right)\left(1-\gamma s+\mathcal{O}(s^2)\right)\left(\frac{1}{s}+\gamma+\mathcal{O}(s)\right)\right\}.
\end{align*}
We now observe that the singular terms containing $1/s$ cancel each other. Hence after simplifying  the expressions, we arrive at
\begin{align}\label{ax0}
B(x;0)=-\frac{1}{2x}\left\{\gamma-(x-1)\log(x-1)+x\log(x)-\log(2\pi x)\right\}.
\end{align}
Substituting the value of $B(x;0)=B(x)$ from \eqref{ax0} in \eqref{withax}, we see that 
\begin{align*}
&\mathscr{F}_1(x)-\mathscr{F}_1(x-1)+\frac{1}{x}\mathscr{F}_1\left(\frac{x-1}{x}\right)\\
&=\frac{1}{2x}\left\{\gamma-(x-1)\log(x-1)+x\log(x)-\log(2\pi x)\right\}.
\end{align*}
Replacing $x$ by $x+1$ in the above equation and simplifying, we obtain
\begin{align}\label{threeterm}
&\mathscr{F}_1(x)-\mathscr{F}_1(x+1)-\frac{1}{x+1}\mathscr{F}_1\left(\frac{x}{x+1}\right)=-\frac{1}{2(x+1)}\left\{\gamma-\log(2\pi)-x\log\left(\frac{x}{x+1}\right)\right\}.
\end{align}
This equation holds for $x>0$. Since both sides of \eqref{threeterm} are analytic in the region $x \in \mathbb{C} \setminus (-\infty,0]$, we can conclude, by analytic continuation, that \eqref{threeterm} holds for $x\in \mathbb{C} \setminus (-\infty,0]$. Now again employing $\mathscr{F}_1(x)=\mathcal{F}_1(x)+\frac{1}{2}\left(\gamma-\log\left(\frac{2\pi}{x}\right)\right)$ in \eqref{threeterm} and simplifying the expressions, we complete the proof of \eqref{refined 3-term}.
\end{proof}

\medskip
\begin{proof}[\textbf{Theorem \textup{\ref{asymptotics}}}][] 
For $|\arg(x)|<\pi$, as $x\to\infty$, we have \cite[p.~259, formula~6.3.18]{as},
\begin{align*}
\psi(x)\sim\log(x)+\sum_{p=1}^\infty\zeta(1-p)x^{-p}.
\end{align*}
This gives
\begin{align*}
\psi(x)-\log(x)+\frac{1}{2x}\sim\sum_{p=2}^\infty\zeta(1-p)x^{-p}.
\end{align*}
Replacing $x$ by $nx$ in the above formula and using the definition of $\mathcal{F}_1(x)$ from \eqref{refined ramanujan function}, we obtain \eqref{xtendinfinity}.

Equation \eqref{xtendzero} follows easily after invoking functional equation \eqref{refined 2-term} and employing \eqref{xtendinfinity} with replacing $x$ by $1/x$.
\end{proof}

\subsection{ Hecke Eigenform}
Let $\mathcal{M}_n$ be defined in Section \ref{hecke operator section}.
 Now define $\mathcal{M}=\cup_{n\in \mathbb{N}}\mathcal{M}_n$ and let $\mathcal{R}=\mathbb{Q}[\mathcal{M}]=\oplus_{n\in \mathbb{N}}\mathcal{R}_n,$ which is a graded ring.   This ring acts on the right on the vector space of two variable functions on $\mathbb{C}^2$ by the formula
\begin{align}\label{facts}
\left(f\circ \sum_{i}\lambda_i \begin{pmatrix}
a & b \\
c & d 
\end{pmatrix}\right)(x,y)=\sum_i \lambda_if(a_ix+b_iy, c_ix+d_iy).
\end{align}

We need the following result of Radchenko and Zagier \cite[Theorem 1]{raza}:
\begin{proposition}\label{razaprop}
Define
\begin{align}\label{C defn}
\mathscr{C}(x,y):=C(x)C(y)+1,
\end{align}
where $C(x)$ is defined as
\begin{align*}
C(x):=
\begin{cases}
\cot(\pi x),& x\in\mathbb{C}\backslash\mathbb{Z},\\
0, & x\in\mathbb{Z}.
\end{cases}
\end{align*}
Take the $n$th Hecke operator  $\widetilde{T}_n\in\mathcal{R}_n$. Then
\begin{eqnarray*} 
(\mathscr{C}\circ \widetilde{T}_n)(x, y)-\sum_{\ell |n} \ell \mathscr{C}(\ell x, \ell y)=c(\widetilde{T}_n),
\end{eqnarray*}
where $c:\mathcal{R}\rightarrow \mathbb{Z}$ is the group homomorphism defined on generators by
\begin{eqnarray*}
\begin{bmatrix}
a & b \\
c & d 
\end{bmatrix} \rightarrow 
\begin{cases} 
    1- sgn(a+b) sgn(c+d), & a+b\neq 0, c+d\neq 0, \\
1- sgn(a+b) sgn( d), & a+b\neq 0, c+d= 0, \\
1- sgn( b) sgn(c+d), & a+b= 0, c+d\neq 0.
   \end{cases}
\end{eqnarray*}
\end{proposition}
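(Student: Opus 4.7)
The plan is to interpret $\mathscr{C}(x,y)=\cot(\pi x)\cot(\pi y)+1$ as a suitably regularized Eisenstein-type sum over the lattice $\mathbb{Z}^2$, then track the action of the Hecke operator $\widetilde{T}_n$ via the classical correspondence between Hecke cosets and index-$n$ sublattices. Using the partial-fraction expansion $\pi\cot(\pi x)=\lim_{M\to\infty}\sum_{|m|\le M}(x+m)^{-1}$, one can write $\pi^{2}C(x)C(y)$ as a conditionally convergent double sum over $(m,m')\in\mathbb{Z}^2$ in Eisenstein order. I would show that, after a symmetric truncation, the ``$+1$'' in $\mathscr{C}$ absorbs the natural completion term, so that the whole object transforms compatibly under sublattice sums.

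Next, for each $\gamma=\begin{pmatrix}a&b\\c&d\end{pmatrix}\in\mathcal{M}_n$, the map $(m,m')\mapsto\gamma(m,m')^{t}$ identifies $\mathbb{Z}^2$ with the index-$n$ sublattice $\gamma\mathbb{Z}^2\subset\mathbb{Z}^2$, and hence $(\mathscr{C}\circ\gamma)(x,y)$ becomes the corresponding sublattice sum. Summing $\gamma$ over the matrices appearing in $\widetilde{T}_n$ (for instance $\widehat{T}_n$ from \eqref{tntilde}) realizes every index-$n$ sublattice exactly once. By the Hermite normal form, every such sublattice has a unique basis of the shape $\{(\ell_1,0),(b,\ell_2)\}$ with $\ell_1\ell_2=n$ and $0\le b<\ell_2$; the classical distribution relation $\sum_{b=0}^{d-1}\cot(\pi(x+b/d))=d\cot(\pi dx)$ then collapses the inner $b$-sums and yields precisely the diagonal contribution $\sum_{\ell\mid n}\ell\,\mathscr{C}(\ell x,\ell y)$.

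The hard part will be controlling the regularization. Since the double series representing $\cot(\pi x)\cot(\pi y)$ is only conditionally convergent, reorganizing it after the sublattice decomposition produces finite boundary terms at infinity that do not cancel. Collecting these carefully is exactly what yields the constant $c(\widetilde{T}_n)$: the sign factor $1-\operatorname{sgn}(a+b)\operatorname{sgn}(c+d)$ arises from the principal-value limits along the four edges of the truncated summation square, and the degenerate clauses (when $a+b=0$ or $c+d=0$) correspond to a boundary edge becoming parallel to a coordinate axis, forcing a single-variable cotangent sign to take over. As a sanity check I would verify the identity on $\widehat{T}_2$ by direct computation against the four matrices appearing in \eqref{5-term} and the divisor-sum $\sum_{\ell\mid 2}\ell\,\mathscr{C}(\ell x,\ell y)$, confirming both the main term and the vanishing of $c(\widehat{T}_2)$ that follows from the non-negativity of the entries of $\widehat{T}_2$.
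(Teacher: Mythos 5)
There is a fundamental problem with the combinatorial core of your argument, and it is worth noting first that the paper itself does not prove this proposition at all: it is quoted verbatim from Radchenko--Zagier \cite[Theorem 1]{raza}, so your proposal has to stand on its own. The fatal step is the claim that ``summing $\gamma$ over the matrices appearing in $\widetilde{T}_n$ realizes every index-$n$ sublattice exactly once,'' followed by a reduction to Hermite normal form. The Hermite-normal-form representatives of index-$n$ sublattices are exactly the $\sigma_1(n)$ matrices of $T_n^\infty=\sum_{ad=n}\sum_{0\le b<d}\left(\begin{smallmatrix}a&b\\0&d\end{smallmatrix}\right)$, and the identity is \emph{false} for $T_n^\infty$. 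Already for $n=2$: $\widehat{T}_2$ has four matrices while there are only $\sigma_1(2)=3$ sublattices, and a numerical check at $(x,y)=(0.3,0.1)$ gives $(\mathscr{C}\circ T_2^\infty)(x,y)\approx 3.447$ versus $\mathscr{C}(x,y)+2\mathscr{C}(2x,2y)\approx 4.342$; the deficit is precisely the contribution $\mathscr{C}(2x,x+y)$ of the fourth matrix $\left(\begin{smallmatrix}2&0\\1&1\end{smallmatrix}\right)$ of $\widehat{T}_2$, which is \emph{not} a Hermite representative of any new sublattice. So a proof built on the sublattice bijection plus the distribution relation $\sum_{b\bmod d}\cot(\pi(z+b/d))=d\cot(\pi dz)$ would establish the identity for $T_n^\infty$, where it fails, and cannot by itself produce the extra terms that make it true for $\widetilde{T}_n$. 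Any correct proof must actually use the defining property \eqref{existence} of a Hecke operator \emph{on periods} --- e.g.\ via the identities $\mathscr{C}\circ(1+S)=2$ (from $C(-y)=-C(y)$) and the cotangent three-term relation $\mathscr{C}(x,y)=\mathscr{C}(x+y,y)+\mathscr{C}(x+y,x)$, which encode that $\mathscr{C}$ transforms like a period function rather than like a lattice sum. This is where the homomorphism $c$ genuinely enters (it records sign data of the entries), not as boundary terms of a truncated summation square.

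Two smaller points. First, even the identification of $(\mathscr{C}\circ\gamma)(x,y)$ with a sublattice sum is not right as stated: writing $\pi^2C(x)C(y)$ as an Eisenstein-ordered sum of $1/((x+m)(y+m'))$, the composite $\mathscr{C}(ax+by,cx+dy)$ is still a sum over translates by the \emph{full} lattice $\mathbb{Z}^2$, and substituting $(m,m')=\gamma(\mu,\nu)^t$ turns the summand into a product of two linear forms in $(x+\mu,y+\nu)$ rather than a product of the original coordinates, so no clean sublattice reformulation of $\mathscr{C}\circ\gamma$ drops out. Second, your proposed sanity check on $\widehat{T}_2$ would indeed pass --- the proposition is true, and $c(\widehat{T}_2)=0$ --- but passing it would not validate the mechanism you describe, since the four terms of $\widehat{T}_2$ do not arise from the three index-$2$ sublattices.
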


\begin{proof}[\textbf{Theorem \textup{\ref{hecke operator functional equations}}}][]
For Re$(s)>2$ and $x>0$, let us define 
\begin{align}
A(x,s):=\int_0^\infty\frac{t^{s-1}}{\left(e^{xt}-1\right)\left(e^{t}-1\right)}dt.\nonumber
\end{align}
Note that we can rewrite the integral above as
\begin{align}
A(x,s)&=x^{1-s}\Gamma(s-1)\zeta(s-1)-\frac{1}{2}x^{-s}\Gamma(s)\zeta(s)+\int_0^\infty\left(\frac{1}{e^{t}-1}-\frac{1}{t}+\frac{1}{2}\right)\frac{t^{s-1}}{e^{xt}-1}dt.\nonumber
\end{align}
Observe that the right-hand side is analytic for Re$(s)>0$ except at $s=1$ and $s=2$ where it has simple poles. Therefore, it gives analytic continuation of the function $A(x,s)$ in the region Re$(s)>0$. It is seen that, as $s\to1$, we have
\begin{align}\label{final A}
&A(x,s)\nonumber\\
&=-\frac{x+1}{2x}\frac{1}{s-1}+\frac{1}{2}\left(\gamma-\log(2\pi/x)+\frac{\log(x)}{x}\right)+\int_0^\infty\left(\frac{1}{e^{t}-1}-\frac{1}{t}+\frac{1}{2}\right)\frac{1}{e^{xt}-1}dt+\mathcal{O}(s-1)\nonumber\\
&=-\frac{x+1}{2x}\frac{1}{s-1}+\frac{1}{2}\left(\gamma-\log(2\pi/x)+\frac{\log(x)}{x}\right)-\mathscr{F}_1(x)+\mathcal{O}(s-1),
\end{align}
where in the last we used \eqref{integral representation for Ramanujan}.
Equations \eqref{refined ramanujan function} and \eqref{final A} together yield
\begin{align}\label{axs1}
A(x,s)&=-\frac{x+1}{2x}\frac{1}{s-1}-\mathcal{F}_1(x)+\frac{\log(x)}{2x}+\mathcal{O}(s-1).
\end{align}

We next aim to express the integral
\begin{align*}
\int_0^\infty\mathscr{C}(ixt,iyt)t^{s-1}dt
\end{align*}
in terms of the function $A(x,s)$. To that end, using the definition of $\mathscr{C}(x,y)$ given in \eqref{C defn}, we see that
\begin{align*}
\int_0^\infty\mathscr{C}(ixt,iyt)t^{s-1}dt&=\int_0^\infty\left(1+\cot(\pi ixt)\cot(\pi iyt)\right)t^{s-1}dt\\
&=\frac{1}{y^s}\int_0^\infty\left(1+\cot(\pi ixt/y)\cot(\pi it)\right)t^{s-1}dt,
\end{align*}
where we employed the change of variable $t\rightarrow t/y$. Now using the expression $\cot x=i\frac{e^{ix}+e^{-ix}}{e^{ix}-e^{-ix}}$ in the above equation and simplifying, we are led to
\begin{align*}
\int_0^\infty\mathscr{C}(ixt,iyt)t^{s-1}dt&=-\frac{2}{y^s}\int_0^\infty\frac{e^{-\pi xt/y}e^{\pi t}+e^{\pi xt/y}e^{-\pi t}}{\left(e^{-\pi xt/y}-e^{\pi xt/y}\right)\left(e^{-\pi t}-e^{\pi t}\right)}t^{s-1}dt.
\end{align*} 
By adding and subtracting the term $e^{-\pi xt/y}e^{-\pi t}$ in the numerator of the integrand on the right-hand side, we obtain
\begin{align*}
&\int_0^\infty\mathscr{C}(ixt,iyt)t^{s-1}dt\\
&=-\frac{2}{y^s}\int_0^\infty\frac{e^{-\pi xt/y}\left(e^{\pi t}-e^{-\pi t}\right)+2e^{-\pi xt/y}e^{-\pi t}+e^{-\pi t}\left(e^{\pi xt/y}-e^{-\pi xt/y}\right)}{\left(e^{-\pi xt/y}-e^{\pi xt/y}\right)\left(e^{-\pi t}-e^{\pi t}\right)}t^{s-1}dt\\
&=-\frac{2}{y^s}\int_0^\infty\frac{t^{s-1}dt}{e^{2\pi xt/y}-1}-\frac{4}{y^s}\int_0^\infty\frac{t^{s-1}dt}{\left(e^{2\pi xt/y}-1\right)\left(e^{2\pi t}-1\right)}-\frac{2}{y^s}\int_0^\infty\frac{t^{s-1}dt}{e^{2\pi t}-1}\\
&=-2\frac{(2\pi)^{-s}}{x^{s}}\Gamma(s)\zeta(s)-\frac{4}{y^s}(2\pi)^{-s}A\left(\frac{x}{y},s\right)-\frac{2}{y^s}(2\pi)^{-s}\Gamma(s)\zeta(s),
\end{align*}
where in the last step we invoked \eqref{gammazeta}. This now implies that
\begin{align}\label{integral1}
&-\frac{(2\pi)^{s}}{4}\int_0^\infty\mathscr{C}(ixt,iyt)t^{s-1}dt=\frac{1}{y^s}A\left(\frac{x}{y},s\right)+\frac{1}{2}\Gamma(s)\zeta(s)\left(\frac{1}{x^s}+\frac{1}{y^s}\right).
\end{align}
We wish to take $s\to1$ in \eqref{integral1}. Thus, using the fact that 
\begin{align*}
&\frac{1}{2}\Gamma(s)\zeta(s)\left(\frac{1}{x^s}+\frac{1}{y^s}\right)=\frac{x+y}{2xy}\frac{1}{s-1}-\frac{\log(x)}{2x}-\frac{\log(y)}{2y}+\mathcal{O}(s-1), \quad s\to1,
\end{align*}
and invoking \eqref{axs1},   we are led to
\begin{align}\label{eval in terms of F1231}
-\frac{(2\pi)^{s}}{4}\int_0^\infty\mathscr{C}(ixt,iyt)t^{s-1}dt=-\frac{1}{y}\mathcal{F}_1\left(\frac{x}{y}\right)+\mathcal{O}(s-1).
\end{align}
We apply $\widetilde{T}_n$ on both sides of the above equation. We first evaluate the left-hand side.

Employing Proposition \ref{razaprop} and noting that $c(\widetilde{T}_n)=0$ (as we assumed that all the entries are non-negative), we see that
\begin{align}\label{lhs}
-\frac{(2\pi)^{s}}{4}\int_0^\infty\mathscr{C}(ixt,iyt)\circ\widetilde{T}_n\ t^{s-1}dt
&=-\frac{(2\pi)^{s}}{4}\int_0^\infty t^{s-1}\sum_{\ell |n}\ell\mathscr{C}(i\ell  xt,i\ell yt) dt\nonumber\\
&=-\frac{(2\pi)^{s}}{4}\sum_{\ell |n}\ell\int_0^\infty t^{s-1}\mathscr{C}(i\ell  xt,i\ell yt) dt\nonumber\\
&=\sum_{\ell |n}\ell\left\{-\frac{1}{\ell y}\mathcal{F}_1\left(\frac{x}{y}\right)\right\}+\mathcal{O}(s-1)\nonumber\\
&=-d(n)\frac{1}{y}\mathcal{F}_1\left(\frac{x}{y}\right)+\mathcal{O}(s-1),
\end{align}
where the penultimate step we used \eqref{eval in terms of F1231}.

Next we simplify the right-hand side of \eqref{eval in terms of F1231} after applying $\widetilde{T}_n$. 
\begin{align}\label{rhs}
\left(\frac{1}{y}\mathcal{F}_1\left(\frac{x}{y}\right)\right)\circ\widetilde{T}_n&=\left(\frac{1}{y}\mathcal{F}_1\left(\frac{x}{y}\right)\right)\circ\sum_{\gamma}v_\gamma \gamma\nonumber\\
&=\sum_{\gamma}v_\gamma\left(\frac{1}{y}\mathcal{F}_1\left(\frac{x}{y}\right)\right)\circ\gamma\nonumber\\
&=\sum_{\gamma}v_\gamma\left\{\frac{1}{cx+dy}\mathcal{F}_1\left(\frac{ax+by}{cx+dy}\right)\right\},
\end{align}
where we used \eqref{facts}. Therefore Equations \eqref{eval in terms of F1231}, \eqref{lhs} and \eqref{rhs}, after taking $s\to1$ and $y=1$, imply
\begin{align}
\sum_{\gamma}v_\gamma\left\{\frac{1}{cx+d}\mathcal{F}_1\left(\frac{ax+b}{cx+d}\right)\right\}=d(n)\mathcal{F}_1\left(x\right).\nonumber
\end{align}
This completes the proof of the theorem after multiplying the both sides by $\sqrt{n}$ and using the definition of the slash operator.
\end{proof}

 \medskip

\subsection{ Kronecker limit formula}

\begin{proof}[\textbf{Theorem \textup{\ref{analyticity>2}}}][]

We employ the Poisson summation formula in the following form \cite[pp.~60-61]{titch1}
\begin{align*}
\frac{1}{2}f(0)+\sum_{q\geq1}f(q)=\int_0^\infty f(t)dt+2\sum_{\ell\geq1}\int_0^\infty f(t)\cos(2\pi\ell t)dt.
\end{align*}
Letting $f(t)=\frac{(x-y)^s}{(px+t)^s(py+t)^s}$, $\mathrm{Re}(s)>1/2$, in the above formula so as to obtain
\begin{align*}
&\frac{1}{2p^{2s}}\frac{(x-y)^s}{(xy)^s}+\sum_{q\geq1}\frac{(x-y)^s}{(px+q)^s(py+q)^s}\nonumber\\
&=\int_0^\infty\frac{(x-y)^s}{(px+t)^s(py+t)^s}dt+2\sum_{\ell\geq1}\int_0^\infty\frac{(x-y)^s\cos(2\pi\ell t)}{(px+t)^s(py+t)^s}dt.
\end{align*}
We add and subtract the term $q=0$ on the left-hand side, and make the change of variable $t\to pt$ in the first integral on the right-hand side and then rephrase the terms so that
\begin{align}\label{iseqn}
&\sum_{q\geq0}\frac{(x-y)^s}{(px+q)^s(py+q)^s}-\frac{1}{2p^{2s}}\frac{(x-y)^s}{(xy)^s}-\frac{1}{p^{2s-1}}I(s)=2\sum_{\ell\geq1}\int_0^\infty\frac{(x-y)^s\cos(2\pi\ell t)}{(px+t)^s(py+t)^s}dt,
\end{align}
where 
\begin{align}\label{I(s)}
I(s):=\int_0^\infty\frac{(x-y)^s}{(x+t)^s(y+t)^s}dt,
\end{align}
which is an analytic function of $s$ in Re$(s)>1/2$.

From \eqref{iseqn}, it is clear that  $$\sum_{q\geq0}\frac{(x-y)^s}{(px+q)^s(py+q)^s}\ll \frac{1}{p^{2s-1}},\quad \mathrm{as}\ p\to\infty.$$
Hence the double series defining the function $\mathcal{Z}(s;x,y)$ in \eqref{Z1} converges absolutely and uniformly on any compact subset of Re$(s)>2$. Therefore, it is an analytic function of $s$ in this region. This proves the first part of the theorem.

We next provide the analytic continuation of the function $\mathcal{Z}(s;x,y)$ in the bigger region. In order to achieve that, we want to know the behaviour of the left-hand side of \eqref{iseqn} as $p\to\infty$. To that end,  we find a bound for the term on the right-hand side of \eqref{iseqn} as $p\to\infty$. For the convenience, we denote this term by $R(p,s)$:
\begin{align*}
R(p,s):&=2\sum_{\ell\geq1}\int_0^\infty\frac{(x-y)^s\cos(2\pi\ell t)}{(px+t)^s(py+t)^s}dt.
\end{align*}
Making the change of variable $t\to pt$ in the first below and then integrating by parts in the subsequent steps, we see that
\begin{align*}
R(p,s)&=2(x-y)^s\frac{1}{p^{2s-1}}\sum_{\ell\geq1}\int_0^\infty\frac{\cos(2\pi\ell pt)}{(x+t)^s(y+t)^s}dt\nonumber\\
&=2(x-y)^s\frac{1}{p^{2s-1}}\sum_{\ell\geq1}\left\{\frac{\sin(2\pi\ell pt)}{2\pi\ell p(x+t)^s(y+t)^s}\Bigg|_0^\infty+\int_0^\infty\frac{\sin(2\pi\ell pt)s(2t+x+y)}{2\pi\ell p(x+t)^{s+1}(y+t)^{s+1}}dt\right\}\nonumber\\
&=(x-y)^s\frac{s}{\pi}\frac{1}{p^{2s}}\sum_{\ell\geq1}\frac{1}{\ell}\int_0^\infty\frac{\sin(2\pi\ell pt)(2t+x+y)}{(x+t)^{s+1}(y+t)^{s+1}}dt\\
&=(x-y)^s\frac{s}{\pi}\frac{1}{p^{2s}}\sum_{\ell\geq1}\frac{1}{\ell} \left\{\frac{-(2t+x+y)\cos(2\pi\ell pt)}{2\pi\ell p(x+t)^{s+1}(y+t)^{s+1}}\Bigg|_0^\infty-\int_0^\infty\frac{\cos(2\pi\ell pt)}{2\pi\ell p(x+t)^{s+2}(y+t)^{s+2}}\right.\nonumber\\
&\left.\qquad\quad\times\left((4s+2)t^2+(s+1)x^2+2sxy+(s+1)y^2+2t(2s+1)(x+y)\right)dt\right\}\\
&=(x-y)^s\frac{s}{2\pi^2}\frac{x+y}{(xy)^{s+1}}\frac{1}{p^{2s+1}}\sum_{\ell\geq1}\frac{1}{\ell^2}-(x-y)^s\frac{s}{2\pi^2}\frac{1}{p^{2s+1}}\sum_{\ell\geq1}\frac{1}{\ell^2}\int_0^\infty\frac{\cos(2\pi\ell pt)}{(x+t)^{s+2}(y+t)^{s+2}}\nonumber\\
&\quad\times\left\{(4s+2)t^2+(s+1)x^2+2sxy+(s+1)y^2+2t(2s+1)(x+y)\right\}dt\\
&\ll (x-y)^{\mathrm{Re}(s)}\frac{|s|\zeta(2)}{2\pi^2}\frac{x+y}{(xy)^{\mathrm{Re}(s)+1}}\frac{1}{p^{2\mathrm{Re}(s)+1}}+(x-y)^{\mathrm{Re}(s)}\frac{|s|\zeta(2)}{2\pi^2}\frac{1}{p^{2\mathrm{Re}(s)+1}}\\
&\quad\times\int_0^\infty\frac{\left|\left\{(4s+2)t^2+(s+1)x^2+2sxy+(s+1)y^2+2t(2s+1)(x+y)\right\}\right|}{(x+t)^{\mathrm{Re}(s)+2}(y+t)^{\mathrm{Re}(s)+2}}dt,
\end{align*}
Note that the integral in the above expression is convergent for Re$(s)>1/2$ and independent of $p$. This implies that, as $p\to\infty$, we have
\begin{align}\label{rbound}
R(p,s)\ll \frac{1}{p^{2\mathrm{Re}(s)+1}}.
\end{align}
Equations \eqref{iseqn} and \eqref{rbound} together yield 
\begin{align}
&\sum_{q\geq0}\frac{(x-y)^s}{(px+q)^s(py+q)^s}-\frac{1}{2p^{2s}}\frac{(x-y)^s}{(xy)^s}-\frac{1}{p^{2s-1}}I(s)\ll \frac{1}{p^{2\mathrm{Re}(s)+1}}.\nonumber
\end{align}
As a result of this fact, we conclude that the series
\begin{align}\label{conv}
\sum_{p\geq1}p^s\left\{\sum_{q\geq0}\frac{(x-y)^s}{(px+q)^s(py+q)^s}-\frac{1}{2p^{2s}}\frac{(x-y)^s}{(xy)^s}-\frac{1}{p^{2s-1}}I(s)\right\},
\end{align}
is absolutely convergent for Re$(s)>0$. 

For Re$(s)>2$, we can rewrite the function $\mathcal{Z}(s;x,y)$ as 
\begin{align}\label{analytic continuation eqn}
\mathcal{Z}(s;x,y)&=\frac{(x-y)^s}{2(xy)^s}\zeta(s)+\zeta(s-1)I(s)+\sum_{p\geq1}p^s\left\{\sum_{q\geq0}\frac{(x-y)^s}{(px+q)^s(py+q)^s}-\frac{1}{p^{2s-1}}I(s)\right.\nonumber\\
&\left.\quad-\frac{1}{2p^{2s}}\frac{(x-y)^s}{(xy)^s}\right\}.
\end{align}
Now, as a consequence of \eqref{conv} and the analyticity of the Riemann zeta function, combined with the fact that $I(s)$, defined in \eqref{I(s)}, is analytic for Re$(s)>1/2$, we conclude that the right-hand side of \eqref{analytic continuation eqn} provides the analytic continuation of $\mathcal{Z}(s;x,y)$ in the half-plane Re$(s)>1/2$. Moreover, it reveals that the function $\mathcal{Z}(s;x,y)$ has only two simple poles at $s=1$ and $s=2$ in this region due to the poles of zeta functions $\zeta(s)$ and $\zeta(s-1)$.

Next we prove \eqref{first kronecker limit formula eqn}. We want to take limit $s\to1$ in \eqref{analytic continuation eqn}. Hence, using the facts
\begin{align*}
&\frac{(x-y)^s}{2(xy)^s}\zeta(s)=\frac{x-y}{2xy}\frac{1}{s-1}+\frac{x-y}{2xy}\left(\gamma+\log\left(\frac{x-y}{xy}\right)\right)+\mathcal{O}(s-1),\ s\to1,
\end{align*}
and 
\begin{align*}
I(1)=\log\left(\frac{x}{y}\right),
\end{align*}
it is clear that
\begin{align}\label{beforepsi}
&\lim_{s\to1}\left(\mathcal{Z}(s;x,y)-\frac{x-y}{2xy}\frac{1}{s-1}\right)\nonumber\\
&=-\frac{x-y}{2xy}\left(\gamma+\log\left(\frac{x-y}{xy}\right)\right)-\frac{1}{2}\log\left(\frac{x}{y}\right)+\sum_{p\geq1}p\left\{\sum_{q\geq0}\frac{(x-y)}{(px+q)(py+q)}-\frac{1}{p}\log\left(\frac{x}{y}\right)\right.\nonumber\\
&\qquad\left.-\frac{1}{2p^{2}}\frac{(x-y)}{xy}\right\}\nonumber\\
&=-\frac{x-y}{2xy}\left(\gamma+\log\left(\frac{x-y}{xy}\right)\right)-\frac{1}{2}\log\left(\frac{x}{y}\right)+\sum_{p\geq1}p\left\{\frac{1}{p}\sum_{q\geq0}\left(\frac{1}{py+q}-\frac{1}{px+q}\right)-\frac{1}{p}\log\left(\frac{px}{py}\right)\right.\nonumber\\
&\qquad\left.-\frac{1}{2p^{2}}\frac{(x-y)}{xy}\right\}\nonumber\\
&=-\frac{x-y}{2xy}\left(\gamma+\log\left(\frac{x-y}{xy}\right)\right)-\frac{1}{2}\log\left(\frac{x}{y}\right)+\sum_{p\geq1}\left\{\left(\psi(px)-\psi(py)\right)-\log\left(\frac{px}{py}\right)-\frac{1}{2py}+\frac{1}{2px}\right\},
\end{align}
where in the last step we used
\begin{align}\label{psix}
\psi(x)=-\gamma+\sum_{q\geq0}\left(\frac{1}{q+1}-\frac{1}{x+q}\right).
\end{align}
Hence equation \eqref{beforepsi} yields
\begin{align}
&\lim_{s\to1}\left(\mathcal{Z}(s;x,y)-\frac{x-y}{2xy}\frac{1}{s-1}\right)\nonumber\\
&=-\frac{x-y}{2xy}\left(\gamma+\log\left(\frac{x-y}{xy}\right)\right)-\frac{1}{2}\log\left(\frac{x}{y}\right)\nonumber\\
&+\sum_{p\geq1}\left\{\left(\psi(px)-\log(px)+\frac{1}{2px}\right)-\left(\psi(py)-\log(py)+\frac{1}{2py}\right)\right\}\nonumber\\
&=-\frac{x-y}{2xy}\left(\gamma+\log\left(\frac{x-y}{xy}\right)\right)+\mathcal{F}_1(x)-\mathcal{F}_1(y),\nonumber
\end{align}
where in the last step we used the definition of the Ramanujan period function $\mathcal{F}_1(x)$ given in \eqref{refined ramanujan function}. This completes the proof of the theorem.
\end{proof}
 
\medskip

\begin{proof}[\textbf{Theorem \textup{\ref{new limit formula}}}][] 
From \cite[pp.~33-34, Lemma 7(iv)]{vz}, for any $T$, one has
$$\mathscr{D}_n\left(\frac{1}{T-x}\right)=\left(\frac{x-y}{(T-x)(T-y)}\right)^{n+1},$$
where the differential operator $\mathscr{D}_n$ is defined in \eqref{doperator}. Employing the fact above, we can calculate that
\begin{eqnarray}
-\mathscr{D}_{k-1}\left(\frac{1}{p^{2k-1}(px+q)}\right)=\left(\frac{(x-y)}{(px+q)(py+q)}\right)^k.\nonumber
\end{eqnarray} 
This implies that 
\begin{eqnarray}\label{derivative of}
-\mathscr{D}_{k-1}\left(\frac{1}{p^{k-1}(px+q)}\right)=\left(\frac{p(x-y)}{(px+q)(py+q)}\right)^k.
\end{eqnarray} 
Note that using \eqref{psix} in the definition of $\mathscr{F}_k(x)$, $k\geq3$, given in \eqref{fk}, we have
\begin{align}
\mathscr{F}_k(x)=\sum_{p\geq1}\frac{1}{p^{k-1}}\left\{-\gamma+\sum_{q\geq0}\left(\frac{1}{q+1}-\frac{1}{px+q}\right)\right\}.\nonumber
\end{align}
We apply the operator $\mathscr{D}_n$ on the above equation and then using \eqref{derivative of}, we arrive at
\begin{align*}
\mathscr{D}_{k-1}(\mathscr{F}_{k})(x,y)=\sum_{p\geq1,q\geq0}\left(\frac{p(x-y)}{(px+q)(py+q)}\right)^k.
\end{align*}
The definition of $\mathcal{Z}(k,x,y)$ given in \eqref{Z1} along with the above fact implies that
\begin{eqnarray}\label{end1}
\mathcal{Z}(k,x,y)=\mathscr{D}_{k-1}(\mathscr{F}_{k})(x,y).
\end{eqnarray}
Hence using \eqref{end1} in the definition of $\widetilde{\zeta}(s,\mathscr{B} )$ given in  \eqref{re} concludes the proof of the theorem.
\end{proof}

\subsection{Evalutation at Units }

\begin{proof}[\textbf{Theorem \textup{\ref{connection}}}][]
Employing the integral representation of $\mathscr{F}_1(x)$, given in Proposition \ref{int},
\begin{align}
\mathscr{F}_1(x)&=-\int_0^\infty\left(\frac{1}{1-e^{-t}}-\frac{1}{t}-\frac{1}{2}\right)\sum_{n=1}e^{-nxt},\nonumber
\end{align}
we see that
\begin{align*}
&2\mathscr{F}_1(2x)-3\mathscr{F}_1(x)+\mathscr{F}_1\left(\frac{x}{2}\right)\\
&=-\int_0^\infty\left(\frac{1}{1-e^{-t}}-\frac{1}{t}-\frac{1}{2}\right)\left\{\sum_{n\geq1}(1+(-1)^n)e^{-nxt}-3\sum_{n=1}e^{-nxt}+\sum_{n=1}e^{-nxt/2}\right\}dt\\
&=-\int_0^\infty\left(\frac{1}{1-e^{-t}}-\frac{1}{t}-\frac{1}{2}\right)\left\{\sum_{n\geq1}(-1)^ne^{-nxt}-2\sum_{n=1}e^{-nxt}+\sum_{n=1}e^{-nxt/2}\right\}dt\\
&=-\int_0^\infty\left(\frac{1}{1-e^{-t}}-\frac{1}{t}-\frac{1}{2}\right)\left\{\sum_{n\geq1}(-1)^ne^{-nxt}-\sum_{n=1}(1+(-1)^n)e^{-nxt/2}+\sum_{n=1}e^{-nxt/2}\right\}dt\\
&=-\int_0^\infty\left(\frac{1}{1-e^{-t}}-\frac{1}{t}-\frac{1}{2}\right)\left\{\sum_{n\geq1}(-1)^ne^{-nxt}-\sum_{n=1}(-1)^ne^{-nxt/2}\right\}dt.
\end{align*}
We now make change of the variable $t\to2t$ in the second integral so as to obtain
\begin{align*}
&2\mathscr{F}_1(2x)-3\mathscr{F}_1(x)+\mathscr{F}_1\left(\frac{x}{2}\right)\\
&=-\int_0^\infty\left(\frac{1}{1-e^{-t}}-\frac{1}{t}-\frac{1}{2}\right)\sum_{n\geq1}(-1)^ne^{-nxt}dt+\int_0^\infty\left(\frac{2}{1-e^{-2t}}-\frac{1}{t}-1\right)\sum_{n\geq1}(-1)^ne^{-nxt}dt\\
&=\int_0^\infty\left(\frac{2}{1-e^{-2t}}-\frac{1}{1-e^{-t}}-\frac{1}{2}\right)\sum_{n\geq1}(-1)^ne^{-nxt}dt\\
&=-\int_0^\infty\left(\frac{1}{1+e^{-t}}-\frac{1}{2}\right)\frac{1}{1+e^{xt}}dt,
\end{align*}
where in the last line we used the fact $\sum_{n\geq1}(-1)^ne^{-nxt}=-\frac{1}{1+e^{xt}}$. We now separate the integrals and use \eqref{j1x} so that
\begin{align*}
2\mathscr{F}_1(2x)-3\mathscr{F}_1(x)+\mathscr{F}_1\left(\frac{x}{2}\right)&=-J_1(x)+\frac{1}{2}\int_0^\infty\frac{1}{1+e^{xt}}dt\\
&=-J_1(x)+\frac{1}{2x}\log(2),
\end{align*}
which follows upon using the integral evaluation
\begin{align*}
\int_0^\infty\frac{1}{1+e^{xt}}dt=\frac{1}{x}\log(2). 
\end{align*}
This proves \eqref{connection eqn2}.

We next prove \eqref{J 2term}.
We first employ \eqref{connection eqn2} and then rearrange the terms to see that
\begin{align*}
&\mathcal{J}_1(x)-\frac{1}{x}\mathcal{J}_1\left(\frac{1}{x}\right)\\
&=-\left\{2\left(\mathcal{F}_1(2x)-\frac{1}{2x}\mathcal{F}_1\left(\frac{1}{2x}\right)\right)-3\left(\mathcal{F}_1(x)-\frac{1}{x}\mathcal{F}_1\left(\frac{1}{x}\right)\right)+\left(\mathcal{F}_1\left(\frac{x}{2}\right)-\frac{2}{x}\mathcal{F}_1\left(\frac{2}{x}\right)\right)\right\}.
\end{align*}
Equation \eqref{J 2term} now follows after invoking the two-term functional equation \eqref{refined 2-term} of the Ramanujan period function $\mathcal{F}_1(x)$.
\end{proof}

\begin{proof}[\textbf{Proposition \textup{\ref{J at naturals}}}][]
Employing the change of variable $e^{-t}=y$ in \eqref{j1x}, we have
\begin{align*}
\mathcal{J}_1(x)=\int_0^1\frac{y^{x-1}}{(1+y)(1+y^x)}dy+\log(2).
\end{align*}
Letting $x=1/n$ in the above equation and again making the change of variable $y=t^n$, thereby obtaining
\begin{align*}
    \mathcal{J}_1\left(\frac{1}{n}\right)=n\int_0^1\frac{1}{(1+t)(1+t^n)}dt+\log(2).
\end{align*}
Now employing the fact $\frac{n}{1+t^n}=\sum_{j=1}^n\frac{1}{1-te^{\pi i(2j-1)/n}}$, we see that
\begin{align*}
\mathcal{J}_1\left(\frac{1}{n}\right) =\sum_{j=1}^n\int_0^1\frac{1}{(1+t)(1-te^{\pi i(2j-1)/n})}dt+\log(2).
\end{align*}

We next make use of the simple integral evaluation $$\int_0^1\frac{1}{(1-at)(1+t)}dt=-\frac{1}{1+a}\log\left(\frac{1-a}{2}\right),\ a\neq1.$$
The result now follows upon using it in the case when $n$ is even as $e^{\pi i(2j-1)/n}\neq1$ for $1\leq j\leq n$. When $n$ is odd, we separate the term $j=(n+1)/2$ and then simplify to complete the proof of \eqref{J at naturals 1/n}. The other result \eqref{J at naturals n} follows directly from \eqref{J 2term}, which gives
\begin{align*}
\mathcal{J}_1\left(n\right)=\frac{1}{n}\mathcal{J}_1\left(\frac{1}{n}\right),
\end{align*}
 Now, simply substitute the value of $\mathcal{J}_1\left(\frac{1}{n}\right)$ from \eqref{J at naturals 1/n} into the above equation. 
\end{proof}

\begin{proof}[\textbf{Proposition \textup{\ref{J evaluation}}}][]
Define $\mathscr{B}_1$ and $\mathscr{B}_2$
to be the narrow classes with cycles $((2n))$ and $((n+1,2))$.
Note that we have \cite[p.~246]{raza}
\begin{align*}
\mathrm{Red}(\mathscr{B}_1)=\{u\},\qquad \mathrm{Red}(\mathscr{B}_2)=\left\{\frac{u+1}{2},\frac{2u}{u+1}\right\}.
\end{align*}
Let 
\begin{align*}
    \rho(\mathscr{B})=\sum_{k=1}^r\widetilde{P}(w_k,w_k'),
\end{align*}
where $\widetilde{P}(x,y)$ is defined in \eqref{p1 in terms of f}.

For $u=n+\sqrt{n^2-1}$, we have $u'=\frac{1}{u}$, thus 
\begin{align}
\rho(\mathscr{B}_1)=\widetilde{P}\left(u,\frac{1}{u}\right),\nonumber
\end{align}
and 
\begin{align}
\rho(\mathscr{B}_2)=\widetilde{P}\left(\frac{u+1}{2},\frac{u+1}{2u}\right)+\widetilde{P}\left(\frac{2u}{u+1},\frac{2}{u+1}\right).\nonumber
\end{align}
Consider
\begin{align}\label{roh1-rho2}
\rho(\mathscr{B}_1)-2\rho(\mathscr{B}_2)&=\widetilde{P}\left(u,\frac{1}{u}\right)-2\widetilde{P}\left(\frac{u+1}{2},\frac{u+1}{2u}\right)-2\widetilde{P}\left(\frac{2u}{u+1},\frac{2}{u+1}\right).
\end{align}
Equations \eqref{p1 in terms of f} and \eqref{roh1-rho2} together yield
\begin{align}
\rho(\mathscr{B}_1)-2\rho(\mathscr{B}_2)&=\mathcal{F}_1(u)-\mathcal{F}_1\left(\frac{1}{u}\right)-2\mathcal{F}_1\left(\frac{u+1}{2}\right)+2\mathcal{F}_1\left(\frac{u+1}{2u}\right)-2\mathcal{F}_1\left(\frac{2u}{u+1}\right)\nonumber\\
&\quad+2\mathcal{F}_1\left(\frac{2}{u+1}\right)-\frac{u^2-1}{2u}\left(\gamma+\log\left(\frac{u^2-1}{u}\right)\right)+\frac{2(u^2-1)}{(u+1)^2}\nonumber\\
&\quad\times\left(\gamma+\log\left(\frac{2(u^2-1)}{(u+1)^2}\right)\right)+\frac{u^2-1}{2u}\left(\gamma+\log\left(\frac{u^2-1}{2u}\right)\right).\nonumber
\end{align}
Invoking the two-term functional equation \eqref{refined 2-term} in the above equation
\begin{align}\label{before 3f-j1}
\rho(\mathscr{B}_1)-2\rho(\mathscr{B}_2)&=(1-u)\left\{\mathcal{F}_1(u)-\mathcal{F}_1\left(\frac{u+1}{2}\right)-\frac{2}{u+1}\mathcal{F}_1\left(\frac{2u}{u+1}\right)\right\} -\frac{u^2-1}{2u}\log\left(2\right)\nonumber\\
&\quad
+\frac{2(u-1)}{(u+1)}\left(\gamma+\log\left(\frac{2(u-1)}{(u+1)}\right)\right).
\end{align}
We rewrite the 5-term functional equation given in \eqref{5-term} as
\begin{align*}
&4\mathcal{F}_1(x)-2\mathcal{F}_1(2x)-\mathcal{F}_1\left(\frac{x}{2}\right)
-\mathcal{F}_1\left(\frac{x+1}{2}\right)-\frac{2}{x+1}\mathcal{F}_1\left(\frac{2x}{x+1}\right)  =0.
\end{align*}
This implies that
\begin{align*}
3\mathcal{F}_1(x)-2\mathcal{F}_1(2x)-\mathcal{F}_1\left(\frac{x}{2}\right)&=-\mathcal{F}_1(x)+\mathcal{F}_1\left(\frac{x+1}{2}\right)+\frac{2}{x+1}\mathcal{F}_1\left(\frac{2x}{x+1}\right).
\end{align*}
Now employing \eqref{connection eqn2} in the above equation, we obtain
\begin{align}\label{3f-j1}
&\mathcal{F}_1(x)-\mathcal{F}_1\left(\frac{x+1}{2}\right)-\frac{2}{x+1}\mathcal{F}_1\left(\frac{2x}{x+1}\right) =-\mathcal{J}_1(x) 
 +\frac{x+1}{2x}\log(2).
\end{align}
We now substitute the value from \eqref{3f-j1} into \eqref{before 3f-j1} to arrive at
\begin{align}
\rho(\mathscr{B}_1)-2\rho(\mathscr{B}_2)
&=(1-u)\left\{-\mathcal{J}_1(u)+\frac{u+1}{2u}\log(2)-\frac{2\gamma}{u+1}+\frac{u+1}{2u}\log(2)\right.\nonumber\\
&\left.\qquad -\frac{2}{u+1}\log\left(\frac{2(u-1)}{(u+1)}\right)\right\}\nonumber\\
&=(u-1)\left\{\mathcal{J}_1(u)+\frac{2\gamma}{u+1}-\frac{u^2+1}{u(u+1)}\log(2)
+\frac{2}{u+1}\log\left(\frac{u-1}{u+1}\right)\right\}.\nonumber
\end{align}
This along with \eqref{roh1-rho2} completes the proof of the proposition.
\end{proof}

\subsection{  Eisenstein series of weight 1  }
\begin{proof}[\textbf{Proposition \textup{\ref{Fourier}}}][]
For $\tau\in\mathbb{H}$, the definition of $\mathscr{F}_1(\tau)$, given in \eqref{ramanujan function}, implies that
\begin{align*}
\mathscr{F}_1(-\tau)- \mathscr{F}_1(\tau)
&=\sum_{n\geq1}\left(\psi(-n\tau)-\psi(n\tau)-\frac{1}{n\tau}+\log(n\tau)-\log(-n\tau)\right)\nonumber\\
&=\sum_{n\geq1}\left(\pi\cot(\pi n\tau)+i\pi\right),
\end{align*}
where we used the fact 
\begin{align*}
\psi( -\tau)-\psi(\tau) -\frac{1}{\tau}=\pi \cot(\pi \tau)\qquad (\tau\in\mathbb{H}),
\end{align*}
and $\log(\tau)-\log(-\tau)=i\pi.$ 
We now see that
\begin{align}\label{laststep}
\mathscr{F}_1(-\tau)-\mathscr{F}_1(\tau) &=-\sum_{n\geq1}\left(\pi\cot(-\pi n\tau)-i\pi\right)\nonumber\\
&=-\sum_{n\geq1}\left(i\pi\frac{e^{-\pi in\tau}+e^{\pi in\tau}}{e^{-\pi in\tau}-e^{\pi in\tau}}-i\pi\right) =-\sum_{n\geq1}\left(\frac{2i\pi e^{\pi in\tau}}{e^{-\pi in\tau}-e^{\pi in\tau}}\right)\nonumber\\
&=-2\pi i\sum_{n\geq1}\frac{1}{e^{-2\pi in\tau}-1} =-2\pi i\sum_{n\geq1}\sum_{m\geq1}e^{2\pi imn\tau}.
\end{align}
The proposition now follows by substituting $$\mathscr{F}_1(\tau)=\mathcal{F}_1(\tau)+\frac{1}{2}\left(\gamma-\log\left(\frac{2\pi}{\tau}\right)\right),$$ as given in \eqref{refined ramanujan function}, into \eqref{laststep} and simplifying the expressions.
\end{proof}

\subsection{As a limit of a period function of Lewis-Zagier}

\begin{proof}[\textbf{Proposition \textup{\ref{properties of GRPF}}}][]
(1) Assume Re$(s)>1$ and using the definition of the Hurwitz zeta function,
\begin{align*}
\zeta(s,x)=\sum_{m=0}^\infty\frac{1}{(m+x)^s},
\end{align*}
 we have
\begin{align}
&\sum_{n=1}^\infty\left(\zeta(2s,nx)-\frac{1}{2}(nx)^{-2s}+\frac{1}{1-2s}(nx)^{1-2s}\right)+\frac{1}{2}\zeta(2s)+\frac{x^{1-2s}}{2s-1}\zeta(2s-1)\nonumber\\
&=\sum_{n=1}^\infty\sum_{m=0}^\infty\frac{1}{(m+nx)^{2s}}-\frac{1}{2}x^{-2s}\zeta(2s)+\frac{1}{2}\zeta(2s)\nonumber\\
&=\sum_{n=1}^\infty\sum_{m=1}^\infty\frac{1}{(m+nx)^{2s}}+\frac{1}{2}x^{-2s}\zeta(2s)+\frac{1}{2}\zeta(2s)\nonumber\\
&={\sum_{m,n\geq0}}^*\frac{1}{(mx+n)^{2s}}, \nonumber
\end{align}
which is nothing but the function $\psi_s^+(x)$ defined in \eqref{psi+}. This proves the first part of the proposition.

\noindent
(2) For Re$(s)>-1,\ s\neq1$ and Re$(a)>0$,  we have \cite[p.~609, Formula 25.11.27]{nist}
\begin{align}\label{irh}
\zeta(s,a)-\frac{1}{2}a^{-s}-\frac{a^{1-s}}{s-1}=\frac{1}{\Gamma(s)}\int_0^\infty\left(\frac{1}{e^t-1}-\frac{1}{t}+\frac{1}{2}\right)e^{-at}t^{s-1}dt.
\end{align}
Replacing $s$ by $2s$ and letting $a=nx$ in \eqref{irh} and then taking sum over $n$, we see that
\begin{align}\label{1}
\sum_{n=1}^\infty\left(\zeta(2s,nx)-\frac{1}{2}(nx)^{-2s}+\frac{(nx)^{1-2s}}{1-2s}\right)=\frac{1}{\Gamma(2s)}\int_0^\infty\left(\frac{1}{e^t-1}-\frac{1}{t}+\frac{1}{2}\right)\frac{t^{2s-1}}{e^{xt}-1}dt.
\end{align}
Using the well-known integral representation of $\zeta(s)$ \cite[p.~604, Formula 25.5.1]{nist}
\begin{align*}
\zeta(s)=\frac{1}{\Gamma(s)}\int_0^\infty\frac{t^{s-1}}{e^t-1}dt, \qquad(\mathrm{Re}(s)>1)
\end{align*}
one can obtain
\begin{align}\label{2}
\frac{1}{2}\zeta(2s)=\frac{x^{2s}}{2}\frac{1}{\Gamma(2s)}\int_0^\infty\frac{t^{2s-1}}{e^{xt}-1}dt,
\end{align}
and
\begin{align}\label{3}
\frac{1}{2s-1}x^{1-2s}\zeta(2s-1)=\frac{1}{\Gamma(2s)}\int_0^\infty\frac{t^{2s-1}}{t(e^{xt}-1)}dt.
\end{align}
Substituting values from \eqref{1}, \eqref{2} and \eqref{3} in \eqref{f-psi}, we complete the proof of the second part.

\noindent
(3) Adding and subtracting $$\frac{1}{\Gamma(2s)}\sum_{k=1}^m\frac{B_{2k}\Gamma(2s+2k-1)}{(2k)!(nx)^{2s+2k-1}}$$ $B_k$'s being Bernoulli numbers, in the summand of the series on the right-hand side of \eqref{f-psi} so as to obtain
\begin{align}\label{asub}
\psi_s^+(x)&=\sum_{n=1}^\infty\left(\zeta(2s,nx)-\frac{1}{2}(nx)^{-2s}+\frac{1}{1-2s}(nx)^{1-2s}-\frac{1}{\Gamma(2s)}\sum_{k=1}^m\frac{B_{2k}\Gamma(2s+2k-1)}{(2k)!x^{2s+2k-1}}\right)\nonumber\\
&\quad+\frac{1}{2}\zeta(2s)+\frac{x^{1-2s}}{2s-1}\zeta(2s-1)+\frac{1}{\Gamma(2s)}\sum_{k=1}^m\frac{B_{2k}\Gamma(2s+2k-1)\zeta(2s+2k-1)}{(2k)!x^{2s+2k-1}}.
\end{align}

As $a\to\infty$ in the sector $|\arg(a)|\leq\pi-\delta$, we have \cite[p.~610, Formula 25.11.43]{nist}
\begin{align*}
\zeta(s,a)-\frac{1}{2}a^{-s}-\frac{a^{1-s}}{s-1}=\frac{1}{\Gamma(s)}\sum_{k=1}^m\frac{B_{2k}\Gamma(s+2k-1)}{(2k)!a^{s+2k-1}}+\mathcal{O}\left(\frac{1}{x^{s+2m+1}}\right),
\end{align*}
for $m\geq0$ be any integer.

Using the above asymptotic, it is easy to see that the infinite series in \eqref{asub} defines an analytic function of $s$ in the region Re$(s)>-m$. Moreover, observe that the term $\frac{x^{1-2s}}{2s-1}\zeta(2s-1)$ has only a simple pole at $s=1$, and the point $s=1/2$ is the removable singularity of the term $\frac{1}{2}\zeta(2s)+\frac{x^{1-2s}}{2s-1}\zeta(2s-1)$. Whereas, we can also see that the finite sum is an entire function of $s$ because we can re-write 
\begin{align*}
\frac{\Gamma(2s+2k-1)\zeta(2s+2k-1)}{\Gamma(2s)}&=(2s+2k-2)\zeta(2s+2k-1)\frac{\Gamma(2s+2k-2)}{\Gamma(2s)}\\
&=\left\{(2s+2k-2)\zeta(2s+2k-1)\right\}\left\{(2s+2k-3)\cdots(2s-1)(2s)\right\}.
\end{align*}
Hence this term is also an entire function of $s$. Consequently, we can conclude that the right-hand side of \eqref{asub} provides the analytic continuation of the function $\psi_s^+(x)$ in the region Re$(s)>-m$, with only a simple pole at $s=1$. Moreover, the residue at $s=1$ comes from the term $\frac{x^{1-2s}}{2s-1}\zeta(2s-1)$,  which can be evaluated to $1/(2x)$. 

The above discussion is valid for any integer $m\geq0$. Therefore, it  gives the desired analytic continuation in the whole complex plane and completes the proof of our aim.

\noindent
(4) Letting $s\to1/2$ in \eqref{f-psi}, and using the well-known result
\begin{align*}
\lim_{s\to1/2}\left\{\zeta(2s,x)-\frac{x^{1-2s}}{2s-1}\right\}=\log(x)-\psi(x),
\end{align*}
along with the fact that 
\begin{align*}
\lim_{s\to1/2}\left\{\frac{1}{2}\zeta(2s)+\frac{x^{1-2s}}{2s-1}\zeta(2s-1)\right\}=\frac{1}{2}\left(\gamma-\log\left(\frac{2\pi}{x}\right)\right),
\end{align*}
and the definition of the Ramanujan period function \eqref{refined ramanujan function}, we complete the proof of our claim.
\end{proof}

\begin{proof}[\textbf{Theorem \textup{\ref{hecke operator functional equationss}}}][]
The proof of this result runs along the same lines as that of Theorem \ref{hecke operator functional equations}, so we will be concise here.

For Re$(s)>1$ and $x>0$, let us define 
\begin{align}
B(x,s):=\frac{1}{\Gamma(2s)}\int_0^\infty\frac{t^{2s-1}}{\left(e^{xt}-1\right)\left(e^{t}-1\right)}dt.\nonumber
\end{align}
Note that we can rewrite the integral above as
\begin{align}
B(x,s)&=\frac{x^{1-2s}}{2s-1}\zeta(2s-1)-\frac{1}{2}x^{-2s}\zeta(2s)+\frac{1}{\Gamma(2s)}\int_0^\infty\left(\frac{1}{e^{t}-1}-\frac{1}{t}+\frac{1}{2}\right)\frac{t^{2s-1}}{e^{xt}-1}dt.\nonumber
\end{align}
Observe that the right-hand side is analytic for Re$(s)>0$ except at $s=1/2$ and $s=1$ where it has simple poles. Therefore, the expression on the right-hand of the above equation gives analytic continuation of the function $B(x,s)$ in the region Re$(s)>0$. 

Using the integral representation for $\psi_s^+(x)$ from \eqref{integral representation for GRPF}, we have
\begin{align}\label{final B}
B(x,s)&=-\frac{1}{2}\zeta(2s)-\frac{1}{2}x^{-2s}\zeta(2s)+\psi_s^+(x).
\end{align}
It is easy to see that
\begin{align}
&-\frac{(2\pi)^{2s}}{4\Gamma(2s)}\int_0^\infty\mathscr{C}(ixt,iyt)t^{2s-1}dt=\frac{1}{y^{2s}}B\left(\frac{x}{y},s\right)+\frac{1}{2}\zeta(2s)\left(\frac{1}{x^{2s}}+\frac{1}{y^{2s}}\right).\nonumber
\end{align}
Invoking \eqref{final B} in the above equation,   we are led to
\begin{align}\label{eval in terms of F}
-\frac{(2\pi)^{2s}}{4\Gamma(2s)}\int_0^\infty\mathscr{C}(ixt,iyt)t^{2s-1}dt=\frac{1}{y^{2s}}\psi_s^+\left(\frac{x}{y}\right).
\end{align}
We apply $\widetilde{T}_n$ on both sides of the above equation. 

By repeating the calculations performed after (8.12), but now for a general $s$, one can complete the proof of the result.
\end{proof}

\begin{proof}[\textbf{Proposition \textup{\ref{RaBC}}}][]
Using the functional equation of the Riemann zeta function 
\begin{align*}
\zeta(s)=2^s\pi^{s-1}\sin\left(\frac{\pi s}{2}\right)\Gamma(1-s)\zeta(1-s)
\end{align*}
in \eqref{eisen}, we obtain
\begin{align}
E_{2s}(\tau)&=1+\frac{(2\pi)^{2s}}{\cos(\pi s)\Gamma(2s)\zeta(2s)}\sum_{n=1}^\infty\sigma_{2s-1}(n)q^n\nonumber\\
&=1+\frac{2(2\pi)^{2s}e^{-\pi is}}{\Gamma(2s)\zeta(2s)(1+e^{-2\pi is})}\sum_{n=1}^\infty\sigma_{2s-1}(n)q^n.\nonumber
\end{align}
This implies that
\begin{align}\label{es final1}
\frac{1}{2}\left(1+e^{-2\pi is}\right)\zeta(2s)E_{2s}(\tau)&=\frac{1}{2}\left(1+e^{-2\pi is}\right)\zeta(2s)+\frac{(-2\pi i)^{2s}}{\Gamma(2s)}\sum_{n=1}^\infty\sigma_{2s-1}(n)q^n.
\end{align}

Lewis and Zagier defined the following function \cite[p.~243, (4.3)]{lz}
\begin{align}\label{ftau}
f_s(\tau):=\psi^+_s(\tau)+\tau^{-2s}\psi^+_s(-1/\tau),
\end{align}
where $\psi^+_s(\tau)$ is defined in \eqref{psi+}. They showed that \cite[p.~243, (4.3)]{lz}
\begin{align}\label{ftau1}
f_s(\tau)=\frac{1}{2}\left(1+e^{-2\pi is}\right)\zeta(2s)+\frac{(-2\pi i)^{2s}}{\Gamma(2s)}\sum_{n=1}^\infty\sigma_{2s-1}(n)q^n.
\end{align}
From \eqref{es final1} and \eqref{ftau1}, we have
\begin{align}\label{es final}
\frac{1}{2}\left(1+e^{-2\pi is}\right)\zeta(2s)E_{2s}(\tau)&=f_s(\tau).
\end{align}
Then \eqref{es final} implies that
\begin{align}\label{5.6}
f_s(\tau)-\frac{1}{\tau^{2s}}f_s\left(-\frac{1}{\tau}\right)&=\frac{1}{2}\left(1+e^{-2\pi is}\right)\zeta(2s)\left\{E_{2s}(\tau)-\frac{1}{\tau^{2s}}E_{2s}\left(-\frac{1}{\tau}\right)\right\}.
\end{align}
On the other hand, \eqref{ftau} results in
\begin{align}\label{5.5}
f_s(\tau)-\frac{1}{\tau^{2s}}f_s\left(-\frac{1}{\tau}\right)&=\psi^+_s(\tau)+\tau^{-2s}\psi^+_s(-1/\tau)-\frac{1}{\tau^{2s}}\left\{\psi^+_s(-1/\tau)+(-\tau)^{2s}\psi^+_s(\tau)\right\}\nonumber\\
&=\psi^+_s(\tau)\left(1-(-1)^{2s}\right)\nonumber\\
&=\psi^+_s(\tau)\left(1-e^{-2\pi is}\right).
\end{align}
Equations \eqref{5.5} and \eqref{5.6} together yield
\begin{align}
\psi^+_s(\tau)&=\frac{1}{2}\frac{\left(1+e^{-2\pi is}\right)}{(1-e^{-2\pi is})}\zeta(2s)\left\{E_{2s}(\tau)-\frac{1}{\tau^{2s}}E_{2s}\left(-\frac{1}{\tau}\right)\right\}\nonumber\\
&=\frac{1}{2}\frac{\left(1+e^{-2\pi is}\right)}{(1-e^{-2\pi is})}\zeta(2s)\Psi_s(\tau),\nonumber
\end{align}
where $\Psi_s(\tau)$ is defined in \eqref{psi+}. This completes the proof of the proposition.
\end{proof}

\bigskip
{\bf{Acknowledgements:}} 
We would like to thank the referees for their careful reading of the manuscript and for their numerous helpful comments and suggestions which greatly improved the exposition of this paper. 
The first author was partially supported by   NRF$2022R1A2B5B- 0100187113$, BSRI-NRF$2021R1A6A1A10042944$ and the second author was partially supported by the FIG grant of IIT
Roorkee. Both the authors thank respective funding agencies/institutes.





\end{document}